\newtheorem{theorem}{Theorem} 
\newtheorem{lemma}[theorem]{Lemma}
\newtheorem{definition}[theorem]{Definition}
\newtheorem{remark}[theorem]{Remark}
\newtheorem{proposition}[theorem]{Proposition}
\newcommand{\C}{\mathbb{C}}
\newcommand{\N}{\mathbb{N}}
\newcommand{\slt}{\sll(2,\C)}
\newcommand{\id}{\textrm{id}}
\newcommand{\p}[1]{\frac{\partial}{\partial #1}}
\DeclareMathOperator*{\Ker}{Ker}
\DeclareMathOperator*{\sll}{sl}
\renewcommand{\Im}{\operatorname{Im}}
\title
[Normal forms and nilpotent vector fields]
{Normal forms with exponentially small remainder and Gevrey normalization for vector fields with a nilpotent
linear part}
\author{P. Bonckaert}
\address{Universiteit Hasselt\\ 
Agoralaan Gebouw D\\
3590 Diepenbeek\\
Belgium\\
patrick.bonckaert@uhasselt.be
}
\author{F. Verstringe}
\address{Universiteit Hasselt\\ 
Agoralaan Gebouw D\\
3590 Diepenbeek\\
Belgium\\
freek.verstringe@uhasselt.be
}
\keywords{Normal forms, Nilpotent linear part, Representation theory, Gevrey normalization}
\subjclass{37G05, 34C20, 37C10}
\begin{document}
\begin{abstract}
We explore the convergence/divergence of the normal form for a singularity of a vector field on $\C^n$ with
nilpotent linear part.  We show that a Gevrey-$\alpha$ vector field $X$ with a nilpotent linear part can be
reduced to a normal form of Gevrey-$1+\alpha$ type with the use of a Gevrey-$1+\alpha$  transformation. We also give a proof of
the existence of an optimal order to stop the normal form procedure. If one stops the normal form procedure at this order, the
remainder becomes exponentially small.
\end{abstract}

\maketitle

\section{Introduction and statement of the results} 
We consider vectors fields where the linear part at a singularity is nilpotent, with no restriction on the
dimension. This, for example, includes the case of a coupled Takens-Bogdanov system, see e.g.
\cite{MR2647461}. See \cite{MR1941477} for an introduction to the subject.

We briefly give some history of the subject. In \cite{MR0339292} the planar case $y\frac{\partial}{\partial
x}+ \ldots$ was considered and a formal normal form $(y+a(x))\frac{\partial}{\partial x}+
b(x)\frac{\partial}{\partial y}$ was derived. It was shown in \cite{MR1885678}, also in the planar case, that
an analytic vector field with nilpotent linear part $y\p{x}$ can be analytically transformed to a normal form.
Other results related to the planar case are in \cite{MR2116814}.

More recently it was shown in \cite{pre05797622} that the analytic vector fields with linear part
$y\p{x}+z\p{y}$ can be Gevrey-1 reduced to a normal form using a specific normal form procedure that is also
described later on in this article. This framework was extended in \cite{pre05797622} to the case of
quasihomogeneous vector fields. In \cite{pre05797622} it is explained what the generalization of the so called
small denominators are for non-diagonal linear vector fields (and more general quasihomogeneous vector
fields); and some results of convergence and Gevrey-1 normalization are explained (See also theorems
\ref{thm:ann} and \ref{thm:gev}).

In \cite{MR2130546} the optimal order to cut off the normal form procedure was determined. At this order the remainder becomes
exponentially small. This was done for vector fields with a semi-simple linear part with eigenvalues that satisfy $|\left<\lambda,\delta\right>-\lambda_j|\geq \gamma/|\delta|^\tau$ for each $\delta\in \N^n$, $j\in \left\{
1,\ldots, n
\right\}$ for which $\left<\lambda,\delta\right>-\lambda_j\neq 0$ and a certain $\gamma,\tau>0$; resonant
eigenvalues are allowed. A similar result was also
obtained
for vector fields with a completely degenerate linear part (i.e. all eigenvalues are zero) and at most one non-trivial Jordan block
of order 2 or 3. The difficulty starts with Jordan blocks of higher dimension or multiple Jordan blocks because it seems rather
difficult to compute the generalized small denominators directly using the method at hand. In this article we will succeed in computing
these generalized small denominators using  the $\slt$ structure that is hidden in the problem.

Somewhat before that, we have the results of \cite{MR855083} on the formal structure of the normal forms with
a nilpotent linear part, using representation theory of $\slt$. More recently \cite{MR2484141} and
\cite{MR2647461} have also made contributions to this subject in the multidimensional case, on the formal
level.

The purpose of this article is to combine both ideas : we will show how to use representation theory of $\slt$
in nilpotent cases in order to calculate the small denominators in the framework of \cite{pre05797622} and
hence obtain qualitative information on the growth of coefficients appearing inside the normal form procedure.

Let us state the main result. We say that the linear part $N$  of a vector field $X$ is nilpotent at $0$ if it
acts as a nilpotent linear operator on the space of polynomials of degree $\delta$, for each $\delta\in
\N\setminus\{0\}$. Note that this means, up to a linear change of the coordinates, that the linear part of the
vector field can be written as $N=\sum_{i=1}^{n-1}a_ix_{i+1}\p{x_i}$, for certain $a_1,\ldots, a_{n-1} \in
\C$. We say that a vector field $X=\sum_{i=1}^{n}X_i\p{x_i}$ is Gevrey-$\alpha$ if each $X_i$ is a formal
power series $\sum_{l\geq 0}\sum_{|k|=l}a_kx^k$ and there exists a $C,r>0$ such that $\sum_{|k|=l}|a_k|\leq
Cr^l(l!)^\alpha$; the sum is
taken over $k\in \N^n$, and $|k|=k_1+\ldots+k_n$.
\begin{theorem}
\label{theo:mainres}
Every formal Gevrey-$\alpha$ vector field $X=N+R$, where $N$ is a nilpotent linear part and $R$ is a part of
higher order, admits a formal Gevrey-$1+\alpha$ transformation to Gevrey-$1+\alpha$ normal form. If the
Gevrey-$\alpha$ vector field is formally linearizable, then the transformation and corresponding normal form
are Gevrey-$\alpha$.  
\end{theorem}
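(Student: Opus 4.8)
The plan is to run the $\slt$-based normal form procedure of \cite{pre05797622} degree by degree, and to control the accumulated growth of the generated coefficients by estimating the norm of the inverse homological operator through the representation theory of $\slt$. First I would fix the $\slt$-structure attached to $N$. Writing $L_N=\mathrm{ad}_N$ for the Lie-derivative action of $N$ on the space $\mathcal{V}_\delta$ of homogeneous polynomial vector fields of degree $\delta$, one embeds $L_N$ as the lowering operator $f$ of an $\slt$-triple $(e,h,f)$ acting on $\mathcal{V}_\delta$ (using the transpose nilpotent to produce $e$). Each $\mathcal{V}_\delta$ then decomposes into irreducible $\slt$-modules, and the normal form is chosen as the $\slt$-covariant complement $\ker(e)$ (highest-weight vectors) to $\Im(L_N)=\Im(f)$. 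The highest weights occurring in $\mathcal{V}_\delta$ grow at most linearly in $\delta$; this structural fact is what replaces the classical small-denominator count in the nilpotent setting.

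Next I would make the homological step explicit. At degree $\delta$ one solves $L_N Y_\delta = Z_\delta$ for $Z_\delta\in\Im(L_N)$ and records the normal-form part in $\ker(e)$. On each irreducible component the inverse of $f$ on its image, together with the covariant projection onto $\ker(e)$ along $\Im(f)$, is given by an explicit polynomial in $e,f,h$ whose coefficients involve products of the $\slt$ structure constants $k(m+1-k)$. Estimating these products, measured in the monomial (Gevrey) norm rather than the abstract weight basis, is the heart of the matter: I expect the operator norm of $(L_N|_{\Im})^{-1}$ on $\mathcal{V}_\delta$ to be bounded by $C^\delta\,\delta!$, the single factorial being exactly the loss of one Gevrey order.

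With this bound in hand I would close the argument by a majorant/recursive estimate in the spirit of the standard normal-form convergence proofs. Conjugating $N+R$ by $\exp(L_{Y_\delta})$ feeds the factorially-amplified $Y_\delta$ back into the higher-order brackets, and carrying the bound $\|(L_N|_{\Im})^{-1}\|_\delta\le C^\delta\,\delta!$ through the recursion upgrades a Gevrey-$\alpha$ input to a Gevrey-$(1+\alpha)$ transformation and normal form. For the formally linearizable case the normal form reduces to $N$, so no resonant (kernel) terms are ever produced to be fed back through the recursion; I expect that it is precisely this feedback of normal-form terms that drives the factorial accumulation, so that in its absence the refined $\slt$ estimate yields only geometric growth $\le C^\delta$ and hence a Gevrey-$\alpha$ linearization.

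The main obstacle throughout is the second step: extracting a sharp factorial bound for the inverse homological operator from the $\slt$ structure constants, uniformly over all irreducible components appearing in $\mathcal{V}_\delta$ and transported into the monomial norm in which the Gevrey estimates are stated. This is exactly the computation of the generalized small denominators that the introduction flags as difficult by direct means, and the representation-theoretic packaging of $L_N$ is what I would rely on to make it tractable.
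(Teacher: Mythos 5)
Your high-level plan---realize $\mathrm{ad}_N$ and its adjoint as members of an $\slt$-triple acting on $\mathcal{V}_\delta$, decompose $\mathcal{V}_\delta$ into irreducibles, and use this to control the homological equation inside the framework of \cite{pre05797622}---is exactly the paper's strategy (Lemma \ref{lem:reprth} and Proposition \ref{prop:mainprop}). But the quantitative heart of your argument is wrong, and wrong in a way that would break the proof. You posit that the $\slt$ structure yields $\|(L_N|_{\Im})^{-1}\|_\delta\le C^\delta\,\delta!$ on $\mathcal{V}_\delta$, and that this single factorial ``is'' the loss of one Gevrey order. What the $\slt$ structure actually gives is much stronger and quite different: after normalizing the nilpotent part as $N=\sum_i\sqrt{i(n+1-i)}\,x_{i+1}\p{x_i}$ (this scaling is needed so that the raising operator is literally $d_0^*$, the adjoint for the inner product the procedure uses), the operator $\Box_\delta=d_0d_0^*$ is conjugate to a direct sum of the diagonal matrices $\widetilde{N}_i\widetilde{M}_i$, whose nonzero eigenvalues are \emph{positive integers}. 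Hence $a_\delta=\min_{\lambda\in\Lambda^*}\sqrt{\lambda}\ge 1$ for every $\delta$: the inverse homological operator is bounded uniformly in $\delta$ (a Siegel condition with $\tau=0$, $\gamma=1$), not factorially. This distinction is not cosmetic. If the inverse really grew like $\delta!$ at each degree, the recursion $a_\delta\eta_\delta=\max\eta_{\delta_1}\cdots\eta_{\delta_r}$ would accumulate a product of factorials (roughly $\prod_{k\le\delta}k!$), which is not Gevrey of any finite order; your majorant step would not close at Gevrey-$(1+\alpha)$.

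The ``$+1$'' in the Gevrey order has a different source: it is intrinsic to the normalization procedure whenever resonant terms are present, coming from the feedback of $R^\prime$ through the terms $R(I+U)-DU.R^\prime$ in equation (\ref{eq:1}); this is precisely the content of Theorem \ref{thm:gev}, which gives Gevrey-$(1+\tau+\alpha)$ even when $\tau=0$, while Theorem \ref{thm:ann} shows the loss disappears when $X$ is formally linearizable (no resonant terms to feed back). Your final paragraph actually senses this correctly, but it contradicts your stated operator bound: the inverse homological operator is applied at every degree in the linearizable case too, so if it genuinely carried a factor $\delta!$ you could not obtain geometric growth there either. The repair is the paper's route: prove integrality of the nonzero spectrum of $\Box_\delta$ via the irreducible decomposition, conclude $a_\delta\ge 1$ (hence both the Siegel condition with $\tau=0$, $\gamma=1$ and the diophantine condition $\eta_\delta\le 1$), and then invoke Theorem \ref{thm:gev}, respectively Theorem \ref{thm:ann}, as black boxes; no new majorant analysis is needed, and none of your conjectured factorial estimates ever enters.
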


\begin{theorem}[\cite{MR2130546}, Corollary 1.9 p.7 and \cite{pre05797622}, Theorem 6.11 p.689]
\label{theo:mainres2}
Suppose that $X$ is an analytic vector field in a neighbourhood of $0$ of $\C^m$ that vanishes at the origin. This is
\begin{align*}
  X=L+\sum_{\delta\geq 2}f_{\delta},
\end{align*}
where $L$ is linear and nilpotent, $f_\delta$ is a bounded $\delta$-linear symmetric form and
\begin{align*}
  ||f_\delta(x_1,\ldots,x_\delta)||\leq \frac{c||x_1||\ldots ||x_\delta||}{\rho^\delta},
\end{align*}
with $c,\rho>0$ independent of $\delta$.
Suppose that $Q$ is an invertible linear transformation with norm $||Q||=\sup_{||x||=1}||Q(x)||$ for which $N=QLQ^{-1}$ is as in
proposition \ref{prop:mainprop}.
Let $\nu=\sup\frac{e^2p!}{p^{p+1/2}e^{-p}}$,
\begin{align*} 
  C&=\frac{\sqrt{m}}{\rho^2}\left\{ \left( \frac{5}{2}m+2 \right)c ||Q||^2.||Q^{-1}||^2+3\rho ||Q||.||Q^{-1}|| \right\}\\
M_0&=\frac{10}{9}\left\{ \left( \nu\sqrt{\frac{27}{8e}} \right)+(2e)^{2} \right\},
\end{align*}
$w=\frac{1}{eC}$ and $p_{\rm{opt}}=\left[ \frac{1}{eC} \right]$. There exists a polynomial coordinate transformation
$\id+u_{p_{\rm{opt}}}$
of degree at most $p_{\rm{opt}}$ such that vector fields expressed in new coordinates becomes 
$Y=L+R_{p_{\rm{opt}}}+T_{p_{\rm{opt}}}$, where $R_{p_{\rm{opt}}}$ is a polynomial of degree $p_{\rm{opt}}$ and 
$T_{p_{\rm{opt}}}=O(|x|^{p_{\rm{opt}}+1})$ is analytic and exponentially small i.e. for $0<\epsilon$ small enough we have the estimate
\begin{align*}
  \sup_{||x||\leq \varepsilon}||T_{p_{\rm{opt}}}(x)||\leq M_0 \varepsilon^2 e^{-w/\varepsilon}.
\end{align*}
Moreover $R_{p_{\rm{opt}}}(e^{t\bar{L}}x)=e^{t\bar{L}}R_{p_{\rm{opt}}}(x)$, where $\bar{L}=Q^{-1}N^*Q$.
\end{theorem}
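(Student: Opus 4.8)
The plan is to realise the required coordinate change as a composition of near-identity maps $\id+u_d$, one for each degree $d$ from $2$ up to the cut-off $p$, each designed to remove the non-resonant part of the degree-$d$ homogeneous component, and then to choose $p$ so as to minimise the resulting remainder. First I would conjugate by $Q$ so that the nilpotent linear part becomes the $\slt$-normalised $N$ of Proposition \ref{prop:mainprop}; the price of this change is exactly the factors $\|Q\|,\|Q^{-1}\|$ that appear in the constant $C$. Working with $N$, I would take the complement of $\Im(\mathrm{ad}_N)$ in each space of homogeneous vector fields to be $\Ker(\mathrm{ad}_{N^*})$, the inner-product normal form of \cite{MR855083}. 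This choice is precisely what forces the stated equivariance $R_{p_{\rm opt}}(e^{t\bar{L}}x)=e^{t\bar{L}}R_{p_{\rm opt}}(x)$ with $\bar{L}=Q^{-1}N^*Q$, since the highest/lowest-weight description of the normal-form space is invariant under the flow of the conjugate nilpotent.

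The heart of the argument is the homological equation $\mathrm{ad}_N(u_d)=\Pi\,g_d$, where $g_d$ is the degree-$d$ part to be normalised and $\Pi$ is the projection onto $\Im(\mathrm{ad}_N)$. Because $N$ is nilpotent, $\mathrm{ad}_N$ has no spectral gap and the classical small-denominator estimate is unavailable; instead I would bound the norm of the partial inverse $(\mathrm{ad}_N)^{-1}$ on $\Im(\mathrm{ad}_N)$ by decomposing the space of degree-$d$ vector fields into irreducible $\slt$-modules. On each irreducible summand $\mathrm{ad}_N$ acts as a weighted shift whose restriction to its image is invertible with an explicitly computable norm, and summing over the weights yields a bound that is only polynomial in $d$. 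This is the content packaged by theorems \ref{thm:ann} and \ref{thm:gev}, and it is the step that plays the role of the estimate on $|\langle\lambda,\delta\rangle-\lambda_j|$ in the semisimple case.

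With the polynomial bound on the homological inverse in hand, I would set up a recursion for the norm of the remainder after $d$ steps, measured through the $\delta$-linear-form estimates at radius $\rho$. Each step contributes the inverse of $\mathrm{ad}_N$ (polynomial in $d$) together with the quadratic bracket terms produced by the conjugation, so the degree-$d$ coefficients grow like $C^{d}\,d!$; that is, the normalising series is Gevrey-$1$, consistent with Theorem \ref{theo:mainres}. Evaluating the tail of degrees $\geq p+1$ on $\|x\|\leq\varepsilon$ then gives a bound of the form $C^{d}\,d!\,\varepsilon^{d}$, and I would optimise over the truncation order $p$ by Stirling's formula: the supremum $\nu$ is exactly the constant needed to convert the factorial into an exponential, and the balance between factorial growth and geometric decay fixes $p_{\rm opt}$ together with the remainder bound $M_0\varepsilon^2 e^{-w/\varepsilon}$, $w=1/(eC)$.

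The main obstacle is the second step: controlling the generalised small denominators uniformly in the degree when $N$ has several Jordan blocks, or blocks of size larger than three, where the direct computation used in \cite{MR2130546} becomes intractable. The $\slt$-representation theory is what makes this step feasible, turning an opaque linear-algebra estimate into a sum of explicit weight computations; the remaining work — the recursion, the Gevrey-$1$ bookkeeping, and the Stirling optimisation — is then a careful but routine tracking of the constants $c,\rho,m,\|Q\|,\|Q^{-1}\|$ into the final $C$ and $M_0$, after which the general optimal-truncation machinery of \cite{pre05797622} and \cite{MR2130546} applies verbatim.
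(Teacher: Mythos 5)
Your overall strategy---conjugate by $Q$ to the normalized nilpotent $N$ of proposition \ref{prop:mainprop}, control the homological equation via the decomposition of $\mathcal{V}_\delta$ into irreducible $\slt$-modules, and then let the optimal-truncation machinery of \cite{MR2130546} and \cite{pre05797622} do the rest---is the same as the paper's. But there is a genuine gap at the step you yourself call the heart of the argument: you only claim that the partial inverse of $d_0$ on $\Im(d_0)$ is bounded \emph{polynomially} in the degree $d$. That estimate is too weak to give the theorem as stated. In the framework of section \ref{sec:hist}, a polynomial bound of order $\tau>0$ is a Siegel condition of order $(\tau,\gamma)$, and the packaged truncation result (theorem \ref{thm:optord}) then only yields the exponent $e^{-w/\varepsilon^{b}}$ with $b=1/(1+\tau)<1$, a constant $C$ carrying the extra factor $a=1/\gamma$, and the constant $M_\tau$ rather than $M_0$---not the exponent $e^{-w/\varepsilon}$ and the constants claimed in theorem \ref{theo:mainres2}. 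Your own bookkeeping shows the mismatch: from a merely polynomial inverse the degree-$d$ coefficients grow like $C^d(d!)^{1+\tau}$, not $C^d\,d!$, so the asserted Gevrey-$1$ growth and the value $w=1/(eC)$ do not follow from what you have established. (A smaller inaccuracy: the small-denominator control is not ``packaged'' by theorems \ref{thm:ann} and \ref{thm:gev}; those consume the denominator estimate, they do not produce it, and the relevant consumer here is theorem \ref{thm:optord}.)

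What the representation theory actually delivers---and what the paper proves---is much stronger than a polynomial bound. After splitting the representation on $\mathcal{V}_\delta$ into irreducibles (theorem \ref{theo:simp}), the operator $d_0d_0^*$ is conjugate to $\widetilde{N}_1\widetilde{M}_1\oplus\ldots\oplus\widetilde{N}_l\widetilde{M}_l$, and each $\widetilde{N}_i\widetilde{M}_i$ is diagonal with entries $k(n_i+1-k)$, which are nonnegative integers. Hence every nonzero eigenvalue of $d_0d_0^*$ is a positive integer, so
\begin{align*}
a_\delta=\min_{\lambda\in\Lambda^*}\sqrt{\lambda}\geq 1
\end{align*}
uniformly in $\delta$: the generalized small denominators are not small at all, i.e.\ $N$ satisfies the Siegel condition with $(\tau,\gamma)=(0,1)$. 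With this uniform bound, theorem \ref{thm:optord} applied with $\tau=0$, $\gamma=1$ gives theorem \ref{theo:mainres2} verbatim, including $b=1$, $w=1/(eC)$, and $M_0$; there is also no need to re-derive the recursion and Stirling optimisation, since the cited theorem packages them. To repair your argument, replace ``polynomial in $d$'' by this uniform lower bound $a_\delta\geq 1$---which your weight computation would in fact produce if carried out explicitly---and then specialise the black-box theorem accordingly.
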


Remark that the cases where $N=y\p{x}$ and $N=(y\p{x}+z\p{y})$ have already been treated in \cite{MR2130546}.
This theorem provides a generalization and a geometric explanation using representation theory of $\slt$ of
these examples. Considering the results in \cite{MR1885678} and \cite{MR2199230} one could wonder whether or
not the given normal form actually converges, when $X$ is analytic (i.e. $\alpha=0$). We think however that,
in general, this is not the case.

In section \ref{sec:hist} of this article we repeat some results of the framework created in
\cite{pre05797622}, in order to be self-contained.  In section \ref{sec:repr} we state some results on the
representation theory of $\slt$. In section \ref{sec:construct} we prove some propositions that lead to the
main results, stated as theorem \ref{theo:mainres} and theorem \ref{theo:mainres2} above and proven in section \ref{sec:mainres}.

\section{Background and notation}
\label{sec:hist}
We recall some standard preliminaries about the used normal form procedure. We follow the outline of \cite{pre05797622}, although
similar ideas appear in \cite{MR923885} and \cite{MR2130546}.
\subsection{Setting}
Let $X=N+R$ be a local formal vector field in the neighbourhood of the origin, $N$ its linear part and $R$ the
part or order $\geq 2$. We will look for a coordinate transform $\Phi^{-1}=I+U$, $U$ of order $\geq 2$, such
that the pullback $\Phi_*(X)=X^\prime=N+R^\prime$. A minor calculation shows that 
\begin{align}
\notag&\Phi_*(X)=N+R^\prime\\
\notag\Leftrightarrow & X \circ \Phi^{-1}=D\Phi^{-1}.X^\prime\\
\notag\Leftrightarrow & (S+R)\circ(I+U)=D(I+U).(N+R^\prime)\\
\label{eq:1}\Leftrightarrow & R^\prime+[U,N]=R(I+U)-DU.R^\prime
\end{align}

Now we are going to determine the terms of order $\delta$ for the formal series $U=U_2+U_3+U_4+\ldots$ and $R^\prime=R_2^\prime+R_3^\prime+R_4^\prime+\ldots$
by induction. Therefore suppose that we already know $U_2$, $\ldots$, $U_{\delta-1}$ and $R_2^\prime$, $\ldots$,  $R_{\delta-1}^\prime$. We take the projection of
the terms of order $\delta$ in ($\ref{eq:1}$) and obtain: 
\begin{align*}
R^\prime_\delta+[U_\delta,N]=RHS_\delta(U_{<\delta},R^{\prime}_{<\delta}).
\end{align*}
where $RHS_\delta$ stands for the projection of order $\delta$ of the right hand side of ($\ref{eq:1}$) and depends only on $U_l, R^{\prime}_l$ with index strictly smaller than $\delta$. Therefore it is natural to introduce the Lie-operator
\begin{align*}
d_{0,\delta}:\mathcal{V}_\delta\longrightarrow \mathcal{V}_\delta:U\mapsto [U,N];
\end{align*}
where
\begin{align*}
\mathcal{V}_\delta&=\left\{\sum_{i=1}^nP_i\p{x_i}|P_i\in \mathcal{P}_{\delta+1}\right\},\\
\mathcal{P}_\delta&=\left\{P|\,\text{$P$ is a homogeneous polynomial of degree $\delta$}\right\},
\end{align*}
and decompose the space $\mathcal{V}_\delta$ of vector fields of degree $\delta$ as $\mathcal{V}_\delta=\Im(d_{0,\delta})\oplus \mathcal{W}_\delta$,
where $\mathcal{W}_\delta$ is a particular choice of a complementary space that is induced by an inner product. This is explained in detail in the
next section. Remark that we will sometimes drop the $\delta$ in the notation whenever no confusion is possible. 

\subsection{The choice of the complementary subspaces $\mathcal{W}_\delta$}

In order to define the complementary spaces $\mathcal{W}_\delta$ we need the adjoint action of $d_0$ with respect to an inner product. Therefore we have the following
\begin{definition}
We define an inner product on $\mathcal{P}_\delta$, the space of polynomials of degree $\delta$ as 
\begin{align*}
\left<\sum_{|\alpha|=\delta}a_{\alpha}x^{\alpha},\sum_{|\beta|=\delta}b_{\beta}x^{\beta}\right>=\sum_{|\alpha|=\delta}a_{\alpha}\bar{b}_{\alpha}\frac{\alpha!}{|\alpha|!}.
\end{align*}
This induces an inner product on the space $\mathcal{V}_{\delta-1}$ of vector fields of degree $\delta-1$ as follows:
\begin{align}
\label{eq:inpvv}
\left<\sum_{k=1}^{n} V_k \p{x_k},\sum_{k=1}^{n} W_k \p{x_k}\right>=
\sum_{k=1}^{n} \left<V_k,W_k\right>_{\delta},
\end{align}
where the $V_k,W_k$ are elements of $\mathcal{P}_\delta$.
\end{definition}

Now we define $d_0^*$ as the adjoint action of $d_0$ with respect to the above inner product. We repeat that $d_0^*$ is defined as the unique linear
map satisfying $\left<d_0^*(V),W\right>=\left<V,d_0(W)\right>$, for all $V,\,W \in \mathcal{V}_\delta$ . From linear algebra we know that:

\begin{enumerate}
\item The operators $\Box_\delta=d_0d_0^*$ are self-adjoint.
\item The operators $\Box_\delta$ are diagonizable.
\item The operators $\Box_\delta$ have real positive eigenvalues.
\item $\mathcal{V}_\delta=\Ker(\Box_\delta)\oplus\Im(\Box_\delta)=\Ker(d_0^*)\oplus\Im(d_0)$
\end{enumerate}

We will from now on choose the complementary subspace $W_\delta$ as $\Ker(d_0^*)=\Ker(\Box_\delta)$.

We recall from \cite{pre05797622} a nice way to calculate the adjoint operator $d_0^*$. Let us first define the isomorphism: 
$$\phi:\mathcal{V}_{\delta}\longrightarrow \mathcal{P}_{\delta+1}^n:\sum_{k=1}^nV_k\p{x_k}\mapsto(V_1,V_2,\ldots, V_n)$$
\begin{lemma}[\cite{pre05797622}, p.691]
\label{lem:adjoint} 
Suppose that $V=\sum_{k=1}^nV_k\p{x_k}\in\mathcal{V}_{\delta}$. Then we have 
\begin{align*}
\phi(d_0^*(V))=\left(
\begin{array}{cccc}
N^*-\left(\frac{\partial N_1}{\partial x_1}\right)^*& -\left(\frac{\partial N_2}{\partial x_1}\right)^*&\ldots &-\left(\frac{\partial N_n}{\partial x_1}\right)^*\\
-\left(\frac{\partial N_1}{\partial x_2}\right)^*&N^*-\left(\frac{\partial N_2}{\partial x_2}\right)^*&\ldots &-\left(\frac{\partial N_n}{\partial x_2}\right)^*\\
\vdots &&&\vdots\\
-\left(\frac{\partial N_1}{\partial x_n}\right)^*&\ldots&-\left(\frac{\partial N_{n-1}}{\partial x_n}\right)^*&N^*-\left(\frac{\partial N_n}{\partial x_n}\right)^*
\end{array}\right)
\left(
\begin{array}{c}
  V_1\vphantom{\left(\frac{\partial N_1}{\partial x_1}\right)^*}\\
\vdots\vphantom{\left(\frac{\partial N_1}{\partial x_1}\right)^*}\\
\vdots\\
V_n\vphantom{\left(\frac{\partial N_1}{\partial x_1}\right)^*}
\end{array}\right).
\end{align*}
\end{lemma}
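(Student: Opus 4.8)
The plan is to derive the formula directly from the defining property $\langle d_0^*(V),W\rangle=\langle V,d_0(W)\rangle$, reducing everything to a componentwise computation transported by the isomorphism $\phi$. First I would write the Lie operator $d_0(W)=[W,N]$ out in coordinates. Expanding the bracket of $W=\sum_k W_k\p{x_k}$ and $N=\sum_j N_j\p{x_j}$, the $k$-th component of $d_0(W)$ separates into two structurally different pieces: a derivation piece $N(W_k)=\sum_j N_j\frac{\partial W_k}{\partial x_j}$, in which $N$ acts as a first-order differential operator on the polynomial $W_k$, and a zeroth-order piece $\sum_j\frac{\partial N_k}{\partial x_j}W_j$, in which each coefficient $\frac{\partial N_k}{\partial x_j}$ is a constant because $N$ is linear. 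Since $N$ is linear both pieces are degree-preserving and map $\mathcal{P}_{\delta+1}$ into itself, so the inner product of the definition applies to each summand.

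Next I would substitute this into $\langle V,d_0(W)\rangle=\sum_k\langle V_k,d_0(W)_k\rangle$ and move each operator onto the $V$-slot. For the derivation piece this gives $\langle V_k,N(W_k)\rangle=\langle N^*V_k,W_k\rangle$, where $N^*$ is by definition the adjoint of the degree-preserving operator $f\mapsto N(f)$ on $\mathcal{P}_{\delta+1}$; this is the source of the diagonal entries $N^*$. For the zeroth-order piece, each $\frac{\partial N_k}{\partial x_j}$ is a scalar, and since the inner product is conjugate-linear in its second argument the adjoint of multiplication by that scalar is multiplication by its conjugate, which is exactly what $\left(\frac{\partial N_k}{\partial x_j}\right)^*$ denotes; hence $\langle V_k,\frac{\partial N_k}{\partial x_j}W_j\rangle=\langle\left(\frac{\partial N_k}{\partial x_j}\right)^*V_k,W_j\rangle$.

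The decisive step is to collect the coefficient of each $W_j$ so as to read off $d_0^*(V)_j$ from $\langle d_0^*(V),W\rangle=\sum_j\langle d_0^*(V)_j,W_j\rangle$. This reindexing is where the transpose-like shape of the matrix appears: in the cross term the summation runs over the first slot $V_k$, so once the $W$-index is matched to $j$, the free component index of $d_0^*(V)$ sits in the denominator $\partial x_j$ while the summed index $k$ sits in $N_k$. Carrying this out produces, for the $j$-th component, $N^*V_j-\sum_k\left(\frac{\partial N_k}{\partial x_j}\right)^*V_k$, which is precisely the $j$-th row of the asserted matrix applied to $(V_1,\ldots,V_n)^T$.

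I expect the difficulty here to be bookkeeping rather than conceptual. The overall sign of $d_0^*$ is tied to the chosen sign convention for the bracket $[W,N]$, so I would fix that convention at the outset and track it consistently through both pieces. More importantly, one must keep the two index roles rigorously distinct when transposing, since it is easy to confuse $\frac{\partial N_k}{\partial x_j}$ with $\frac{\partial N_j}{\partial x_k}$, and it is exactly this swap that distinguishes the matrix from its transpose. A final point worth stating explicitly is that $N^*$ is the adjoint of $N$ viewed as an operator on $\mathcal{P}_{\delta+1}$ and not of $N$ as a vector field, which is why the off-diagonal correction terms $-\left(\frac{\partial N_k}{\partial x_j}\right)^*$ are genuinely present.
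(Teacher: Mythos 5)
The paper itself offers no proof of this lemma at all --- it is quoted from \cite{pre05797622} (p.~691) --- so there is no internal argument to compare yours against; judged on its own, your strategy is the right one and essentially the only one: apply the defining property $\left<d_0^*(V),W\right>=\left<V,d_0(W)\right>$, split $d_0(W)$ componentwise into the derivation piece $N(W_k)$ and the zeroth-order piece with constant coefficients $\frac{\partial N_k}{\partial x_j}$ (constant precisely because $N$ is linear), move each piece onto the first slot, and reindex to read off the components of $d_0^*(V)$. The two places where such a computation can silently go wrong --- confusing $\frac{\partial N_k}{\partial x_j}$ with $\frac{\partial N_j}{\partial x_k}$ when collecting coefficients, and conflating $N^*$ (adjoint of $N$ as a derivation on $\mathcal{P}_{\delta+1}$) with the scalar adjoints $\left(\frac{\partial N_k}{\partial x_j}\right)^*$ --- are exactly the points you single out, and you handle both correctly.

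There is, however, one concrete inconsistency that you flagged but did not actually resolve: the sign. With the convention you declare at the outset, $d_0(W)=[W,N]$, the $k$-th component is $\sum_j\frac{\partial N_k}{\partial x_j}W_j-N(W_k)$, and your own two identities then yield
\begin{align*}
\left(d_0^*(V)\right)_j=\sum_k\left(\frac{\partial N_k}{\partial x_j}\right)^*V_k-N^*V_j,
\end{align*}
which is the \emph{negative} of the matrix in the statement. The displayed formula of the lemma --- and the paper's worked example $N=x_2\p{x_1}$, where $d_0^*(V)=N^*V_1\p{x_1}+(-V_1+N^*V_2)\p{x_2}$ --- corresponds to the opposite convention $d_0(W)=[N,W]$, which is in fact what the paper uses later in the proof of Lemma \ref{lem:reprth} (``$d_0$ the Lie operator $U\mapsto[N_n,U]$''), even though Section \ref{sec:hist} defines $d_{0,\delta}:U\mapsto[U,N]$; so the root of the trouble is an inconsistency in the paper itself. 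The slip is harmless for everything the lemma is used for, since $\Ker(d_0^*)$ and the spectrum of $\Box_\delta=d_0d_0^*$ are insensitive to an overall sign, but as written your derivation is internally inconsistent: to finish it you must either start from $[N,W]$ or state the conclusion with a global minus sign.
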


We give an example in the case that $n=2$ and $N=x_2\p{x_1}$. In this case we have
\begin{align*}
\phi(d_0^*(V_1\p{x_1}+V_2\p{x_2}))=\left(
\begin{array}{cc}
N^*& 0\\
-1&N^*
\end{array}\right)
\left(
\begin{array}{c}
V_1\\
V_2
\end{array}\right).
\end{align*}
Hence $d_0^*(V_1\p{x_1}+V_2\p{x_2})=N^*V_1\p{x_1}+(-V_1+N^*V_2)\p{x_2}$.

\subsection{Resonant terms and small denominators}
When solving equation $(\ref{eq:1})$, we decompose $RHS_\delta=Q_\delta\oplus T_\delta$, where $Q_\delta\in \Ker(\Box_\delta)$  and $T_\delta\in \Im(\Box_\delta)=\Im(d_0)$. Now let $\Lambda$ be the set of eigenvalues with multiplicity of the operator $\Box_\delta$ and $\Lambda^*$ be the set of nonzero eigenvalues. Since $\Box_\delta$ is diagonizable, it is possible to decompose $T_\delta$ in a base of eigenvectors of $\Box_{\delta}$. More precisely:
\begin{align*}
T_\delta=\Box_\delta(V_\delta)=
\sum_{\lambda\in \Lambda^*}\Box_\delta(V_{\delta,\lambda})=
\sum_{\lambda\in \Lambda^*}\lambda V_{\delta,\lambda}.
\end{align*}
If we define $W_{\delta,\lambda}=d_0^*(V_{\delta,\lambda})$ and $W_\delta=\sum_{\lambda \in \Lambda^*}W_{\delta,\lambda}$, then

\begin{align*}
d_0(W_\delta)=d_0(\sum_{\lambda \in \Lambda^*}W_{\delta,\lambda})=
\sum_{\lambda \in \Lambda^*}\Box_\delta(V_{\delta,\lambda})=
\sum_{\lambda \in \Lambda^*}\lambda V_{\delta,\lambda}=T_\delta.
\end{align*}

Moreover since 
\begin{align*}
\left<W_{\delta,\lambda},W_{\delta,\lambda}\right>=\left<d_0^*(V_{\delta,\lambda}),d_0^*(V_{\delta,\lambda})\right>=
\left<V_{\delta,\lambda},\Box_\delta(V_{\delta,\lambda})\right>=\lambda\left<V_{\delta,\lambda},V_{\delta,\lambda}\right>,
\end{align*}
it follows that we have the estimates:
\begin{align*}
||T_\delta||^2=||d_0(W_\delta)||^2=\sum_{\lambda \in \Lambda^*}\lambda^2 ||V_{\delta,\lambda}||^2=
\sum_{\lambda \in \Lambda^*}\lambda ||W_{\delta,\lambda}||^2\geq
(\min_{\lambda \in \Lambda^*}(\sqrt{\lambda})||W_\delta||)^2.
\end{align*}
This estimate makes clear that the $\lambda$'s will play the role of the small denominators. We explain now what we mean by `$S$ satisfies a diophantine condition'. Therefore, following \cite{pre05797622}, p .675, we introduce the numbers $a_\delta= \min_{\lambda \in \Lambda^*}(\sqrt{\lambda})$ and define the numbers $\eta_\delta$, for $\delta\geq 0$, recursively by (let $\eta_0=1$)
\begin{align*}
a_\delta\eta_\delta=\max_{\delta_1+\ldots+\delta_r=\delta}\eta_{\delta_1}\ldots\eta_{\delta_r},
\end{align*}
where the maximum is taken over the set where at least two of the $\delta_i$'s  are strictly positive.
We say that $S$ satisfies a diophantine condition if the $\eta_\delta\leq cM^\delta$ for certain positive constants $c,M$.

We will say that `$S$ satisfies a Siegel condition of order $(\tau,\gamma)$' or more shortly `of order $\tau$' whenever we have the estimates:
\begin{align*}
a_\delta \geq\frac{\gamma}{\delta^\tau}
\end{align*}
for a certain $\gamma>0$ and $\tau \geq 0$.
These conditions are important because of the following three theorems:
\begin{theorem}[\cite{pre05797622}, Theorem 5.6 p.676 and Remark 6.7 on p.686]
\label{thm:ann}
Suppose that $X=N+R$ is a formal Gevrey-$\alpha$ vector field that is formally linearizable to its linear part $N$. Suppose that $N$ satisfies a diophantine condition, then $X$ is Gevrey-$\alpha$ linearizable.
\end{theorem}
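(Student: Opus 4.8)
The plan is to run the inductive normal form procedure of the previous subsection in the purely linearizing case — imposing $R'_\delta = 0$ at every order — and to control the growth of the homogeneous blocks $U_\delta$ of the transformation $\Phi^{-1}=I+U$ by combining the small-denominator estimate with the numbers $\eta_\delta$. First I would set up the cohomological equation at each order. With $R'=0$, the order-$\delta$ projection of $(\ref{eq:1})$ becomes $[U_\delta,N]=RHS_\delta$, i.e. $d_0(U_\delta)=RHS_\delta$. Because $X$ is \emph{formally} linearizable, the component of $RHS_\delta$ lying in $\Ker(\Box_\delta)=\mathcal{W}_\delta$ must vanish at every order, so $RHS_\delta=T_\delta\in\Im(d_0)$ and the equation is solvable; I would take the unique solution with $U_\delta\in\Im(d_0^*)$. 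Then the chain of identities in the excerpt relating $\|d_0(W_\delta)\|$ to the eigenvalues of $\Box_\delta$ gives exactly
\[
\|U_\delta\|\le\frac{1}{a_\delta}\,\|RHS_\delta\|,\qquad a_\delta=\min_{\lambda\in\Lambda^*}\sqrt{\lambda}.
\]

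Next I would estimate $\|RHS_\delta\|$ through the composition structure. With $R'=0$ the right-hand side of $(\ref{eq:1})$ reduces to $R\circ(I+U)$, so writing $R=\sum_{m\ge2}f_m(x,\dots,x)$ and expanding each $m$-linear form multilinearly in the homogeneous parts of $I+U$ yields, schematically,
\[
RHS_\delta=\sum_{m\ge2}\ \sum_{\delta_1+\cdots+\delta_m=\delta} f_m\bigl(V_{\delta_1},\dots,V_{\delta_m}\bigr),
\]
where $V_1=\id$ and $V_{\delta_i}=U_{\delta_i}$ for $\delta_i\ge2$. Using the Gevrey-$\alpha$ bounds on the $f_m$ together with the product behaviour of the inner-product norm on polynomials, this gives a recursive inequality $\|RHS_\delta\|\le(\text{Gevrey data})\times\sum\prod\|U_{\delta_i}\|$ in which the sum runs over decompositions of $\delta$ having at least two positive parts — precisely the index set of the defining recursion for $\eta_\delta$. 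Feeding this back through $\|U_\delta\|\le a_\delta^{-1}\|RHS_\delta\|$ and arguing by induction, I would establish a bound of the form $\|U_\delta\|\le C\rho^{\delta}(\delta!)^\alpha\,\eta_\delta$, where the factorial growth $(\delta!)^\alpha$ is inherited from $R$ and survives because composition of Gevrey-$\alpha$ series is again Gevrey-$\alpha$, while $\eta_\delta$ absorbs exactly the accumulated small-denominator factors $1/a_\delta$.

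Finally, invoking the diophantine hypothesis $\eta_\delta\le cM^\delta$, the factor $\eta_\delta$ is merely geometric in $\delta$ and can be folded into the constant $\rho$, producing $\|U_\delta\|\le C'(\rho M)^\delta(\delta!)^\alpha$; summing over the multi-indices of each fixed degree shows that $I+U$ is Gevrey-$\alpha$, which is the assertion. The main obstacle I expect is the second step: matching the combinatorics of $R\circ(I+U)$ — the multilinear expansion of each $f_m$ and the attendant multinomial and norm-comparison factors — to the ``at least two parts positive'' structure of the $\eta_\delta$ recursion, and checking that the factorial bookkeeping closes the induction at exponent $\alpha$ rather than at $1+\alpha$. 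The latter degradation, which genuinely occurs in the general (non-linearizable) normal form, is avoided here precisely because formal linearizability kills the resonant $\mathcal{W}_\delta$-part and the diophantine control on $\eta_\delta$ keeps the small-denominator contribution geometric.
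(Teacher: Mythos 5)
The first thing to say is that the paper does not prove Theorem \ref{thm:ann} at all: it is imported verbatim from \cite{pre05797622} (Theorem 5.6 and Remark 6.7 there), and Section \ref{sec:hist} only recalls the machinery ($a_\delta$, the recursion defining $\eta_\delta$, the diophantine condition) on which that proof runs. Measured against that framework, your outline is faithful to the intended argument: solving $d_0(U_\delta)=RHS_\delta$ with $U_\delta\in\Im(d_0^*)$, the bound $\|U_\delta\|\le a_\delta^{-1}\|RHS_\delta\|$ extracted from the eigenvalue decomposition of $\Box_\delta$, the recursive majorization of $R\circ(I+U)$ over decompositions of $\delta$, and the observation that the diophantine hypothesis makes $\eta_\delta$ geometric so that no factorial growth beyond the $(\delta!)^\alpha$ already present in $R$ is created. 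Your closing remark is also on target: the degradation to Gevrey-$(1+\alpha)$ in Theorem \ref{thm:gev} is tied to keeping the resonant part $R^\prime$ in equation (\ref{eq:1}), and it is precisely the vanishing of $R^\prime$ that removes it here.

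There is, however, one step you assert that genuinely requires an argument: the claim that formal linearizability forces the component of $RHS_\delta$ in $\mathcal{W}_\delta=\Ker(d_0^*)$ to vanish at \emph{every} order. The subtlety is that $RHS_\delta$ is computed from the procedure's own choices $U_2,\dots,U_{\delta-1}\in\Im(d_0^*)$, whereas the hypothesis only supplies \emph{some} formal linearizer, whose homogeneous parts may differ from these choices by elements of $\Ker(d_0)$; the two right-hand sides therefore differ a priori, and the vanishing is not an immediate consequence of the hypothesis. It is true, but by an induction you must run: if $Q_{\delta'}=0$ for all $\delta'<\delta$, then $I+U_2+\dots+U_{\delta-1}$ conjugates $X$ to a field of the form $N+S_\delta+O(\delta+1)$ with $S_\delta=RHS_\delta$; this field is still formally conjugate to $N$, and writing equation (\ref{eq:1}) with $R^\prime=0$ for any formal linearizer of it gives, at its lowest nonlinear order, $S_\delta=[V_\delta,N]\in\Im(d_0)$, i.e.\ $Q_\delta=0$. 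Without this (or an equivalent) argument your induction cannot even assert solvability of the cohomological equation at each step, so it should be added explicitly; the remaining work — matching the multilinear expansion of $R\circ(I+U)$ and the norm comparisons against the ``at least two positive parts'' index set of the $\eta_\delta$ recursion, which you correctly flag as the main obstacle — is exactly the technical content of the cited proof.
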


\begin{theorem}[\cite{pre05797622}, Theorem 6.2 p.683 and Remark 6.7 on p.686]
\label{thm:gev}
Suppose that $X=N+R$ is a Gevrey-$\alpha$ vector field that has a formal normal form $X^\prime=N+R^\prime$ by means of the
procedure explained in this section, and suppose that the linear part $N$ of $X$ satisfies a Siegel condition of order
$\tau$, then $X^\prime$ and $\Phi$ are formal power series of type Gevrey-($1+\tau+\alpha$).
\end{theorem}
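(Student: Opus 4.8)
The plan is to prove the Gevrey-$(1+\tau+\alpha)$ estimate by induction on the degree $\delta$, tracking the norms $\|U_\delta\|$ and $\|R'_\delta\|$ through the recursion that defines them and feeding the small denominators $a_\delta$ into the auxiliary sequence $\eta_\delta$. First I would record the per-degree estimate already implicit in the linear algebra of this section. At order $\delta$ we split $RHS_\delta=Q_\delta\oplus T_\delta$ with $Q_\delta\in\Ker(\Box_\delta)$ and $T_\delta\in\Im(d_0)$, set $R'_\delta=Q_\delta$, and take $U_\delta=W_\delta$ solving $d_0(U_\delta)=T_\delta$. Orthogonality of the two components gives $\|R'_\delta\|\leq\|RHS_\delta\|$, while the computation preceding the statement yields $\|T_\delta\|\geq a_\delta\|W_\delta\|$, hence
\[
\|U_\delta\|\leq a_\delta^{-1}\|RHS_\delta\|,\qquad \|R'_\delta\|\leq\|RHS_\delta\|.
\]
So everything reduces to controlling $\|RHS_\delta\|$, the degree-$\delta$ part of $R(I+U)-DU\cdot R'$, in terms of lower-order data.

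Second, I would expand $RHS_\delta$ as a finite sum of compositional terms coming from $R(I+U)$ and transport terms $[DU\cdot R']_\delta=\sum DU_i\cdot R'_j$. The analytic input here is a submultiplicativity property of the inner product norm on $\mathcal{P}_\delta$ under products and compositions of homogeneous polynomials, with explicit constants, so that each monomial in the expansion is bounded by a product such as $\|R_{\delta_0}\|\,\|U_{\delta_1}\|\cdots$ or $i\,\|U_i\|\,\|R'_j\|$; the factor $i$ arises from differentiating a degree-$i$ object inside $DU_i$. This derivative factor is exactly what forces the extra unit in the Gevrey index: already the scalar model $v_\delta=C\sum_{i+j=\delta} i\,v_iv_j$ has a generating function obeying an Euler-type relation $t^2f'\sim f$, whose coefficients grow like $\delta!$. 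Since linearization corresponds to $R'=0$ and thus kills the transport term, this also explains why Theorem \ref{thm:ann} needs no extra unit while the present normalization does.

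Third, I would run the induction with the ansatz $\|U_\delta\|,\|R'_\delta\|\leq c\,M^\delta\,\eta_\delta\,(\delta!)^{1+\alpha}$, where $\eta_\delta$ is the sequence defined by $a_\delta\eta_\delta=\max\prod\eta_{\delta_i}$. The factor $(\delta!)^\alpha$ accommodates the Gevrey-$\alpha$ growth $\sum_{|k|=l}|a_k|\leq Cr^l(l!)^\alpha$ of the input $R$; the additional $(\delta!)^1$ absorbs the degree factors produced by the transport term; and the defining recursion for $\eta_\delta$ is tailored precisely so that dividing by $a_\delta$ and convolving the lower-order bounds reproduces the same ansatz at level $\delta$, the restriction that at least two of the $\delta_i$ be strictly positive matching the quadratic nature of the nonlinearity. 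The bulk of the work is the bookkeeping that, after the multiplicativity constants are inserted, the convolution of the inductive bounds stays dominated by $c\,M^\delta\,\eta_\delta\,(\delta!)^{1+\alpha}$.

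Finally, I would convert the Siegel condition $a_\delta\geq\gamma\delta^{-\tau}$ into Gevrey growth of $\eta_\delta$: testing the extremal splitting $\delta=(\delta-1)+1$ in the recursion suggests, and a standard estimate confirms, that $\eta_\delta\leq\tilde c\,\tilde M^\delta(\delta!)^\tau$. Substituting this into the ansatz gives $\|U_\delta\|,\|R'_\delta\|\leq C'\,(M')^\delta(\delta!)^{1+\tau+\alpha}$, so $X'=N+R'$ is Gevrey-$(1+\tau+\alpha)$; since the Gevrey class is stable under formal inversion, $\Phi=(I+U)^{-1}$ lies in the same class. The main obstacle is the combined second and third step: securing the convolution estimate for $\|RHS_\delta\|$ with constants compatible with the recursively defined $\eta_\delta$, so that the induction closes and the extra factorial is neither lost nor overcounted; by comparison, deducing that $\eta_\delta$ is Gevrey-$\tau$ from the Siegel condition is delicate but routine.
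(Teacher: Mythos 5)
There is nothing in this paper to compare your proof against: Theorem~\ref{thm:gev} is quoted, with attribution, from \cite{pre05797622} (Theorem 6.2 and Remark 6.7 there) and is used as a black box. The paper's own contribution lies elsewhere --- using $\slt$ representation theory (Lemmas \ref{lem:reprth1}, \ref{lem:reprth}, Proposition \ref{prop:mainprop}) to show that a nilpotent linear part satisfies the Siegel condition with $\tau=0$, $\gamma=1$, so that this imported theorem can be invoked to prove Theorem \ref{theo:mainres}. So your attempt has to be judged against the cited reference, not against anything in this text.

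On its own terms, your outline is consistent with the framework this section sets up, and its correct ingredients are genuinely the right ones: the per-degree bounds $\|R'_\delta\|\leq\|RHS_\delta\|$ and $\|U_\delta\|\leq a_\delta^{-1}\|RHS_\delta\|$ follow exactly from the orthogonal splitting and the displayed eigenvalue estimate; the identification of the transport term $DU\cdot R'$ as the source of the extra $1$ in the Gevrey index (and its disappearance when $R'=0$, explaining why Theorem \ref{thm:ann} has no such loss) is the correct mechanism; and the deduction $\eta_\delta\leq C^\delta(\delta!)^\tau$ from $a_\delta\geq\gamma\delta^{-\tau}$ does work, via $\prod_i\delta_i!\leq(\delta-1)!$ for splittings with at least two positive parts (after normalizing $\gamma\leq 1$). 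The genuine gap is that the two load-bearing analytic steps are asserted rather than proved: (i) the submultiplicativity, with controlled constants, of the norm induced by $\langle x^\alpha,x^\beta\rangle=\delta_{\alpha\beta}\,\alpha!/|\alpha|!$ under products, composition and differentiation --- this is a special property of precisely this inner product and is not automatic; and (ii) the closing of the convolution induction, where one must verify that the constants accumulated over splittings into $r$ parts (with $r$ as large as $\delta$) are absorbed by the geometric factor $M^\delta$, and that exactly one derivative factor per transport term is converted into the factorial rather than compounding. You explicitly defer both to ``bookkeeping,'' but they constitute the bulk of the cited proof; as written, your text is a sound road map for reconstructing \cite{pre05797622}, not a self-contained proof.
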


\begin{theorem}[\cite{MR2130546}, Corollary 1.9 p.7 and \cite{pre05797622}, Theorem 6.11 p.689]
  \label{thm:optord}
Suppose that $X$ is an analytic vector field in a neighbourhood of $0$ of $\C^m$ that vanishes at the origin. This is
\begin{align*}
  X=L+\sum_{\delta\geq 2}f_{\delta},
\end{align*}
where $L$ is linear, $f_\delta$ is a bounded $\delta$-linear symmetric form and
\begin{align*}
  ||f_\delta(x_1,\ldots,x_\delta)||\leq \frac{c||x_1||\ldots ||x_\delta||}{\rho^\delta},
\end{align*}
with $c,\rho>0$ independent of $\delta$.
Suppose that $Q$ is an invertible linear transformation with norm $||Q||=\sup_{||x||=1}||Q(x)||$ for which $N=QLQ^{-1}$ satisfies
the Siegel condition of order $(\tau,\gamma)$.
Let $\nu=\sup\frac{e^2p!}{p^{p+1/2}e^{-p}}$, $a=1/\gamma$,
\begin{align*} 
  C&=\frac{\sqrt{m}}{\rho^2}\left\{ \left( \frac{5}{2}m+2 \right)ac ||Q||^2.||Q^{-1}||^2+3\rho ||Q||.||Q^{-1}|| \right\}\\
M_{\tau}&=\frac{10}{9}\left\{ \left( \nu\sqrt{\frac{27}{8e}} \right)^{1+\tau}+(2e)^{2+2\tau} \right\},
\end{align*}
$b=\frac{1}{1+\tau}$, $w=\frac{1}{eC^b}$ and $p_{\rm{opt}}=\left[ \frac{1}{eC^b} \right]$. There exists a polynomial coordinate transformation
$\id+u_{p_{\rm{opt}}}$
of degree at most $p_{\rm{opt}}$ such that vector fields expressed in new coordinates becomes 
$Y=L+R_{p_{\rm{opt}}}+T_{p_{\rm{opt}}}$, where $R_{p_{\rm{opt}}}$ is a polynomial of degree $p_{\rm{opt}}$ and 
$T_{p_{\rm{opt}}}=O(|x|^{p_{\rm{opt}}+1})$ is analytic and exponentially small i.e. for $0<\epsilon$ small enough we have the estimate
\begin{align*}
  \sup_{||x||\leq \varepsilon}||T_{p_{\rm{opt}}}(x)||\leq M_\tau \varepsilon^2 e^{-w/\varepsilon^b}.
\end{align*}
Moreover $R_{p_{\rm{opt}}}(e^{t\bar{L}}x)=e^{t\bar{L}}R_{p_{\rm{opt}}}(x)$, where $\bar{L}=Q^{-1}N^*Q$.
\end{theorem}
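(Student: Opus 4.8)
The plan is to follow the optimal--truncation strategy of \cite{MR2130546} and \cite{pre05797622}: run the homological procedure of Section~\ref{sec:hist} up to a finite, well chosen order and estimate the analytic tail that is left. After conjugating by $Q$ so that the linear part is $N=QLQ^{-1}$, I would use the construction around~(\ref{eq:1}) to produce, for each $p$, a polynomial near--identity transformation $\id+u_p$ of degree $\le p$ (with $u_p$ of order $\ge 2$) such that in the new coordinates the field reads $N+R_p+T_p$, where $R_p$ is the degree-$\le p$ normal form and $T_p=O(\|x\|^{p+1})$ is the genuine analytic remainder; conjugating back by $Q^{-1}$ gives the stated $Y=L+R_p+T_p$. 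Everything then reduces to bounding $\sup_{\|x\|\le\varepsilon}\|T_p(x)\|$ sharply in $p$ and $\varepsilon$, and choosing $p=p_{\mathrm{opt}}$ to minimize it.

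The first quantitative ingredient is a single--step estimate. Solving $R'_\delta+[U_\delta,N]=RHS_\delta$ amounts to splitting off the resonant part and inverting $\Box_\delta$ on its image; by the computation recalled in Section~\ref{sec:hist} the corrector satisfies $\|W_\delta\|\le a_\delta^{-1}\|T_\delta\|$, and the Siegel hypothesis converts this into $a_\delta^{-1}\le a\,\delta^{\tau}$ with $a=1/\gamma$. Inserting this into majorant bounds for the right--hand side $R(I+U)-DU\cdot R'$ of~(\ref{eq:1}), together with the $\delta$--linear bounds $\|f_\delta\|\le c\rho^{-\delta}$ and the operator norms $\|Q\|,\|Q^{-1}\|$ generated by the conjugation, yields a recursion for the sup--norms over a shrinking family of balls. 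Solving this recursion exactly as in the proof of Theorem~\ref{thm:gev} with $\alpha=0$ produces a Gevrey--$(1+\tau)$ bound of the form
\begin{align*}
\sup_{\|x\|\le\varepsilon}\|T_p(x)\|\ \le\ M\,\varepsilon^{2}\,(C\varepsilon)^{p}\,(p!)^{1+\tau},
\end{align*}
where $C$ is precisely the constant assembled in the statement out of $m$, $\rho$, $ac$, $\|Q\|$ and $\|Q^{-1}\|$, and $M$ is to be absorbed into $M_\tau$.

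With the bound in hand I would optimize over $p$ for each fixed small $\varepsilon$. Writing $p!\le\nu\,p^{\,p+1/2}e^{-p}$ through the Stirling constant $\nu$, the right--hand side behaves, up to algebraic factors, like $\varepsilon^{2}\exp\{p[\log(C\varepsilon)+(1+\tau)\log(p/e)]\}$, whose minimizer sits at an order of size $\asymp(C\varepsilon)^{-b}$ with $b=1/(1+\tau)$, which we take as $p_{\mathrm{opt}}$. Substituting it collapses the factorial growth into $e^{-w/\varepsilon^{b}}$ with $w=1/(eC^{b})$, while the surviving prefactor is gathered into $M_\tau\varepsilon^{2}$, giving the announced estimate $\sup_{\|x\|\le\varepsilon}\|T_{p_{\mathrm{opt}}}(x)\|\le M_\tau\varepsilon^{2}e^{-w/\varepsilon^{b}}$.

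It remains to record the equivariance $R_{p_{\mathrm{opt}}}(e^{t\bar L}x)=e^{t\bar L}R_{p_{\mathrm{opt}}}(x)$, which is built into the choice of complement. Since $\mathcal{W}_\delta=\Ker(d_0^*)=\Ker(\Box_\delta)$, every normalized term lies in $\Ker(d_0^*)$, and unwinding $d_0^*$ via Lemma~\ref{lem:adjoint} identifies this kernel with the vector fields commuting with $\bar L=Q^{-1}N^*Q$; hence each homogeneous $R_\delta$, and therefore $R_{p_{\mathrm{opt}}}$, is invariant under the flow $e^{t\bar L}$. The hard part of the whole argument is the second paragraph: one must track how the factors $a_\delta^{-1}\le a\,\delta^{\tau}$ and the combinatorial weights coming from composing the successive near--identity transformations accumulate, and package Stirling's formula through $\nu$, so that the per--step constant is \emph{exactly} $C$ and the minimization returns clean exponential smallness with the stated $w$ and $M_\tau$. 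Extracting these sharp constants, rather than the mere Gevrey--$(1+\tau)$ growth already furnished by Theorem~\ref{thm:gev}, is precisely the content of \cite{MR2130546} and \cite{pre05797622}.
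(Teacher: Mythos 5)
The first thing to note is that the paper does not prove this statement at all: Theorem \ref{thm:optord} is imported verbatim, with attribution, from \cite{MR2130546} (Corollary 1.9) and \cite{pre05797622} (Theorem 6.11), and its only role in the paper is as a black box whose hypothesis (the Siegel condition) is verified in Section \ref{sec:mainres} with $\tau=0$, $\gamma=1$ via the $\slt$ machinery. So there is no internal proof to compare against; your proposal has to be measured against the proofs in the cited references. At that level your outline is faithful to their architecture: finite-order normalization via the homological equation, small-denominator control $a_\delta^{-1}\le a\,\delta^{\tau}$ from the Siegel condition, a majorant recursion giving a bound of the type $\sup_{\|x\|\le\varepsilon}\|T_p(x)\|\le M\varepsilon^2(C\varepsilon)^p(p!)^{1+\tau}$, Stirling optimization in $p$, and the equivariance of the normal form coming from $\mathcal{W}_\delta=\Ker(d_0^*)$ together with the identification of $\Ker(d_0^*)$ with the fields commuting with the linear flow of $N^*$. (One point in your favor: your truncation order $p_{\mathrm{opt}}\asymp(C\varepsilon)^{-b}$ correctly depends on $\varepsilon$; the statement as transcribed in the paper, $p_{\mathrm{opt}}=\left[1/(eC^b)\right]$, has evidently lost an $\varepsilon$, since a remainder that is $O(|x|^{p+1})$ for a fixed $p$ can never be $O(\varepsilon^2e^{-w/\varepsilon^b})$ as $\varepsilon\to0$.)

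However, as a proof your text has a genuine gap, and you name it yourself: the entire quantitative content of the theorem is the intermediate estimate with \emph{exactly} the constant $C$ of the statement, and you assert it rather than derive it. Saying that one should solve the recursion ``exactly as in the proof of Theorem \ref{thm:gev} with $\alpha=0$'' is not enough: Theorem \ref{thm:gev} controls the Gevrey growth of the coefficients of the infinite formal normal form, whereas here one must bound, uniformly on the ball $\|x\|\le\varepsilon$, the analytic remainder produced by a single polynomial change of variables $\id+u_{p}$. That requires tracking how the norms of the homogeneous pieces $U_2,\dots,U_p$ accumulate through the $\eta_\delta$-type recursion of Section \ref{sec:hist}, controlling the composition $R(I+u_p)$ and the inversion of $I+Du_p$ on a fixed ball, and it is precisely there that the factors $\sqrt{m}$, $\tfrac{5}{2}m+2$, $\|Q\|^2\|Q^{-1}\|^2$, $3\rho$, $\tfrac{10}{9}$ and the prefactor $\varepsilon^2$ in $C$ and $M_\tau$ arise. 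Without that bookkeeping your argument establishes only the qualitative claim that some truncation has an exponentially small remainder for some constants, not the theorem as stated. Deferring that work to the cited references is legitimate --- the paper itself does exactly that --- but then the proposal should present itself as a citation plus a sketch, not as a proof whose ``hard part'' is acknowledged and skipped.
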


\begin{remark}
  
  If $Q$ is a unitary transformation, then the above estimates hold with $||Q||$, $||Q^{-1}||$ replaced by 1; moreover then
  also $\bar{L}=N^*$ holds.
\end{remark}

\section{Representations of $\slt$}
\label{sec:repr}
We briefly recall the definition of a Lie algebra, a representation of a Lie algebra and some related algebraic concepts.
\begin{definition}
A Lie algebra $(\mathfrak{g},[\,,])$ is a vector space $\mathfrak{g}$ provided with a multiplication $[\,,]:\mathfrak{g}\times \mathfrak{g} \mapsto \mathfrak{g}:(x,y)\mapsto [x,y]$ that satisfies the relations
\begin{itemize}
\item $[g_1,g_2]=-[g_2,g_1]$,
\item $[g_1,[g_2,g_3]]+[g_2,[g_3,g_1]]+[g_3,[g_1,g_2]]=0.$
\end{itemize}
We list the following concepts:
\begin{enumerate}
\item A Lie algebra $\mathfrak{g}$ is called simple iff $[\mathfrak{g},\mathfrak{g}]=\mathfrak{g}$.
\item A lie algebra homomorphism is a linear map $L:\mathfrak{g}\longrightarrow \mathfrak{h}$ between two Lie algebra's preserving the product structure : $L([g_1,g_2])=[L(g_1),L(g_2)]$.
\item A Lie algebra representation of $\mathfrak{g}$ is a Lie algebra homomorphism $L:\mathfrak{g}\longrightarrow gl(V)$, where $V$ is a vector space and $gl(V)$ is the group of linear transformations from $V$ to $V$.
\item A Lie algebra representation $L:\mathfrak{g}\longrightarrow gl(V)$ is irreducible, iff there exist no subspace $W$ different from $V$ or $\{0\}$ such that $L(g)(w)\in W$, for every $w\in W$ and every $g\in \mathfrak{g}$. A subspace $W$ with this property defines a subrepresentation.
\end{enumerate}
\end{definition}
We will need one of the key results of representations of simple Lie algebra's. A proof can be found e.g. in \cite{MR0323842}.
\begin{theorem}
\label{theo:simp}
Every finite dimensional representation of a simple Lie algebra $\mathfrak{g}$ can be written as a direct sum of irreducible representations of $\mathfrak{g}$.
\end{theorem}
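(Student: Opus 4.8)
The plan is to deduce this complete reducibility statement from a single splitting principle and then to produce the required splittings with the Casimir operator.

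First I would reduce the claim to the existence of invariant complements. The assertion is equivalent to saying that every subrepresentation $W$ of a finite dimensional representation $V$ admits an invariant complement $W'$ with $V=W\oplus W'$. Granting this, the decomposition into irreducibles follows by induction on $\dim V$: if $V$ has no proper nonzero invariant subspace it is already irreducible; otherwise choose a minimal nonzero invariant subspace $W$ (necessarily irreducible), split off an invariant complement $W'$, and apply the induction hypothesis to $W'$. Thus the entire content lies in splitting every short exact sequence $0\to W\to V\to V/W\to 0$ of $\mathfrak{g}$-modules $\mathfrak{g}$-equivariantly.

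Next I would introduce the Casimir operator. Since $\mathfrak{g}$ is (semi)simple, its Killing form $\kappa(x,y)=\mathrm{tr}(\mathrm{ad}\,x\,\mathrm{ad}\,y)$ is nondegenerate; choosing a basis $\{x_i\}$ with $\kappa$-dual basis $\{x^i\}$ and writing $\rho:\mathfrak{g}\to gl(V)$ for the representation, I would set $C_\rho=\sum_i\rho(x_i)\rho(x^i)$. The two facts I need are that $C_\rho$ is independent of the basis and commutes with every $\rho(g)$ (it is the image of a central element of the enveloping algebra). When $\rho$ is irreducible and nontrivial, Schur's lemma forces $C_\rho$ to be a scalar, and the trace computation $\mathrm{tr}\,C_\rho=\dim\mathfrak{g}$ shows this scalar equals $\dim\mathfrak{g}/\dim V\neq 0$; on a trivial module $C_\rho=0$. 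This dichotomy is the engine of the proof: I would reduce the general splitting problem to the special case in which $\dim(V/W)=1$ and $\mathfrak{g}$ acts trivially on the quotient, by a short formal manipulation (passing to the $\mathfrak{g}$-module $\mathrm{Hom}(V,W)$ and isolating the maps whose restriction to $W$ is a scalar). In this special case, after using the induction to take $W$ irreducible and nontrivial, $C_\rho$ annihilates the trivial quotient but acts invertibly on $W$, so $\Ker(C_\rho)$ is an invariant line complementary to $W$; reassembling gives the invariant complement in general, which with the first paragraph finishes the argument.

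The hard part will be the reduction of the general splitting problem to the one-dimensional trivial-quotient case together with the trace computation certifying that the Casimir scalar is nonzero, and it is precisely here that (semi)simplicity is indispensable: the nondegeneracy of the Killing form is what makes the dual basis, and hence $C_\rho$, exist, so that a merely perfect algebra would not suffice. For the concrete algebra $\slt$ actually needed in this paper one could sidestep these subtleties via Weyl's unitary trick, averaging a Hermitian inner product over the compact real form $\mathfrak{su}(2)$ to make the representation unitary, so that the orthogonal complement of an invariant subspace is invariant and the splitting is immediate; invariance under all of $\slt$ then follows from $\slt=\mathfrak{su}(2)\otimes_{\R}\C$.
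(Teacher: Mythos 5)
The paper itself contains no proof of Theorem \ref{theo:simp}: it is quoted as a classical fact (Weyl's complete reducibility theorem) with a pointer to \cite{MR0323842}, so there is no internal argument to compare against. What you propose is precisely the standard proof found in that reference: reduce complete reducibility to the existence of invariant complements, reduce the splitting of a general subrepresentation $W\subseteq V$ to the special case of a codimension-one quotient with trivial action (via the module $\mathrm{Hom}(V,W)$ and the maps restricting to scalars on $W$), and settle that special case with the Casimir operator, whose kernel is the invariant line. The overall structure is correct and is the canonical route.

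Two steps need repair, though both are one-line fixes. First, you build $C_\rho$ from the Killing form $\kappa$ but then assert $\mathrm{tr}\,C_\rho=\dim\mathfrak{g}$; that identity is valid when the Casimir is built from the trace form $\beta_\rho(x,y)=\mathrm{tr}(\rho(x)\rho(y))$ of the representation itself (nondegenerate for faithful $\rho$ by Cartan's criterion, and any nontrivial representation of a simple algebra is faithful). With the Killing normalization one gets instead $\mathrm{tr}\,C_\rho=\sum_i\beta_\rho(x_i,x^i)=c\,\dim\mathfrak{g}$, where $\beta_\rho=c\,\kappa$; simplicity gives $c\neq 0$ (the space of invariant forms on $\mathfrak{g}$ is one-dimensional, and $c=0$ would make $\rho(\mathfrak{g})\cong\mathfrak{g}$ solvable), so the Schur scalar is still nonzero, but the equation as you wrote it fails already for the defining representation of $\slt$, where $\kappa=4\beta_\rho$. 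Second, in the codimension-one case your induction allows the irreducible submodule $W$ to be the trivial module, where the Casimir vanishes identically and detects nothing; but there $\rho(x)V\subseteq W$ and $\rho(x)W=0$ for all $x$ give $\rho(x)\rho(y)=0$, hence $\rho([\mathfrak{g},\mathfrak{g}])=0$, and simplicity ($[\mathfrak{g},\mathfrak{g}]=\mathfrak{g}$) forces $\rho=0$, so any line complement is invariant. With these two patches your argument is complete; your closing remark that the unitary trick over $\mathfrak{su}(2)$ suffices for the $\slt$ representations actually used in this paper is also correct and is arguably the shortest self-contained justification for the paper's purposes.
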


We now recall some basic facts of the representations of the simple Lie algebra $\slt$. Let us first recall the definition.
\begin{definition}
We define the Lie algebra $\slt$ as the subalgebra of $gl_2(\C)$ of matrices with trace $0$. It is generated by the matrices
\begin{align*}
N=\left(
\begin{array}{cc}
0 & 1\\
0 & 0
\end{array}\right),
M=\left(
\begin{array}{cc}
0 & 0\\
1 & 0
\end{array}\right),
H=\left(
\begin{array}{cc}
1 & 0\\
0 & -1
\end{array}\right).
\end{align*}
\end{definition}
\begin{remark}
\label{rem:liealg}
Any Lie algebra generated by three elements $N,M,H$ and subject to the relations
\begin{align*}
[H,N]=2N,\,
[H,M]=-2M,\,
[N,M]=H,
\end{align*}
is isomorphic with $\slt$. Moreover it is now clear that $\slt$ is a simple Lie algebra.
\end{remark}
The following theorem is well-known: a proof can be found e.g. in \cite{MR0323842}.
\begin{theorem}
\label{theo:irep}
For every $n$ the representation of $\slt$ defined by
\begin{align*}\left(
\begin{array}{cc}
0 & 1\\
0 & 0
\end{array}\right)&\mapsto \widetilde{N}_n=\left(
\begin{array}{cccccc}
0 & n &0&0& \ldots &0\\
0 & 0& n-1&0&\ldots&0\\
\vdots&\vdots&\vdots&\vdots&\vdots&\vdots\\
0 & 0&\ldots&0&2&0\\
0 & 0&\ldots&0&0&1\\
0 & 0&\ldots&0&0&0\\
\end{array}\right)\\
\left(
\begin{array}{cc}
0 & 0\\
1 & 0
\end{array}\right)&\mapsto \widetilde{M}_n=\left(
\begin{array}{cccccc}
0 & 0 &0&0& \ldots &0\\
1 & 0& 0&0&\ldots&0\\
0 & 2& 0&0&\ldots&0\\
\vdots&\vdots&\vdots&\vdots&\vdots&\vdots\\
0 & 0&\ldots&n-1&0&0\\
0 & 0&\ldots&0&n&0\\
\end{array}\right)\\
\left(
\begin{array}{cc}
1 & 0\\
0 & -1
\end{array}\right)&\mapsto \widetilde{H}_n=\left(
\begin{array}{cccccc}
n & 0 &0&0& \ldots &0\\
0 & n-2& 0&0&\ldots&0\\
\vdots&\vdots&\vdots&\vdots&\vdots&\vdots\\
0 & 0&\ldots&0&-(n-2)&0\\
0 & 0&\ldots&0&0&-n\\
\end{array}\right)
\end{align*}
and acting on $\C^{n+1}$ is irreducible.
Moreover, any other irreducible representation of $\slt$ is isomorphic with one of these representations.
\end{theorem}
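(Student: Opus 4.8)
The plan is to carry out the standard highest-weight analysis of $\slt$, using only the abstract commutation relations of Remark \ref{rem:liealg}. Write $N,M,H$ for the images of the generators in an arbitrary finite-dimensional representation $V$ over $\C$. First I would establish the \emph{ladder} structure. Since $\C$ is algebraically closed and $V$ is finite-dimensional, $H$ has an eigenvector; because $[H,N]=2N$ and $[H,M]=-2M$, the operator $N$ raises an $H$-eigenvalue (weight) by $2$ and $M$ lowers it by $2$. Finiteness of the spectrum of $H$ then forces the existence of a highest weight vector $v_0$ with $Hv_0=\lambda v_0$ and $Nv_0=0$, where $\lambda\in\C$ is a priori arbitrary.

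Next I would set $v_k=M^kv_0$ and derive, by induction on $k$ using $[N,M]=H$, the three formulas $Hv_k=(\lambda-2k)v_k$, $Mv_k=v_{k+1}$, and $Nv_k=k(\lambda-k+1)v_{k-1}$. The vectors $v_k$ lie in distinct weight spaces, hence are linearly independent as long as they are nonzero; finite-dimensionality gives a smallest $m$ with $v_{m+1}=0$ and $v_m\neq 0$. Applying $N$ to $v_{m+1}=0$ and using the raising formula yields $(m+1)(\lambda-m)v_m=0$, so $\lambda=m$ is a non-negative integer. The span of $v_0,\ldots,v_m$ is then invariant under $N,M,H$, giving a subrepresentation of dimension $m+1$.

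From this the classification follows. If $V$ is irreducible, then $V$ equals the span of the chain, so $\dim V=m+1$ and the action of $N,M,H$ in the basis $(v_k)$ is completely determined by $m$. Setting $n=m$ and rescaling the basis by $c_k=1/k!$, one checks that $M$ and $N$ acquire exactly the off-diagonal entries $1,2,\ldots,n$ and $n,n-1,\ldots,1$ respectively, while the weights $n,n-2,\ldots,-n$ give the diagonal; this identifies $V$ with the explicit representation $(\widetilde N_n,\widetilde M_n,\widetilde H_n)$ of the statement. Conversely, for the explicit representation the weights $n,n-2,\ldots,-n$ are pairwise distinct, so every weight space is one-dimensional and any nonzero subrepresentation, being $H$-invariant and hence a sum of weight spaces, contains some basis vector $e_j$; since consecutive basis vectors are linked by the nonzero entries of $\widetilde N_n$ and $\widetilde M_n$, repeated application of these operators reaches every $e_i$, and irreducibility follows.

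The main obstacle is purely the bookkeeping in two places: proving the raising formula $Nv_k=k(\lambda-k+1)v_{k-1}$ cleanly by induction, since its termination is precisely what forces $\lambda\in\N$ and pins down $\dim V$; and fixing the rescaling $c_k=1/k!$ so that the abstractly determined action lands on the specific normalization of $\widetilde N_n$ and $\widetilde M_n$ displayed in the statement. Neither step is deep, but the normalization must be tracked carefully to recover the exact matrices.
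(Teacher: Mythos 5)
The paper offers no proof of this theorem at all: it is stated as well known, with a pointer to \cite{MR0323842}, and the highest-weight analysis you give is precisely the standard argument found there. Your proof is correct: the ladder formulas $Hv_k=(\lambda-2k)v_k$, $Nv_k=k(\lambda-k+1)v_{k-1}$, the forced quantization $\lambda=m\in\N$, the rescaling $e_k=v_k/k!$ matching the displayed normalization, and the weight-space argument for irreducibility are all sound. The one step you leave implicit is that the displayed matrices actually satisfy the $\slt$ relations (equivalently, that an irreducible $(n+1)$-dimensional representation exists for every $n$, which your identification presupposes); this is a one-line check, e.g.
\begin{align*}
[\widetilde{N}_n,\widetilde{M}_n]e_k=\bigl((k+1)(n-k)-k(n-k+1)\bigr)e_k=(n-2k)e_k=\widetilde{H}_ne_k,
\end{align*}
and with it included your argument is a complete proof of the statement.
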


\section{Construction of some particular $\slt$ representations}
\label{sec:construct}

In this section we focus on the construction of some particular $\slt$ representations. 
In order to make the computations a bit more transparent, we use the correspondence between matrices and vector fields by a bijection $\phi:\sum_i\sum_j a_{i,j}x_j\p{x_i}\mapsto A=(a_{ij})$. Now suppose that we have two vector fields  $A_v=\sum_{j=1}^n\sum_{i=1}^n a_{ij}x_i\p{x_j}$ and $B_v=\sum_{j=1}^n\sum_{i=1}^n b_{ij}x_i\p{x_j}$ with corresponding matrices $A$ and $B$, then the Lie bracket transforms as $\phi([A_v,B_v])=AB-BA$.

We start now with the construction. Therefore we define the following vector fields:
\begin{align*}
N_n&:=\alpha_1 x_2\p{x_1}+\alpha_2 x_3\p{x_2}+\ldots +\alpha_n x_{n+1}\p{x_n}\\
M_n&=\alpha_1 x_1\p{x_2}+\alpha_2 x_2\p{x_3}+\ldots +\alpha_n x_{n}\p{x_{n+1}}\\
H_n&:=[N_n,M_n].
\end{align*}
It is important to note that $M_n$ is the adjoint of $N_n$ with respect to the inner product (\ref{eq:inpvv}). We will use the same notation $N_n$, $M_n$ and $H_n$ for the associated matrices and drop the index $n$ where no confusion is possible.
We want to choose the coefficients $\alpha_1,\,\ldots ,\,\alpha_n$ in such a way that they are non-zero and that the triple $N$, $M$, $H$ is isomorphic to the Lie algebra $\slt$. Therefore it is sufficient to ensure that the relations
described in remark \ref{rem:liealg} are satisfied. The third relation is automatic from the construction. We focus on the first relation. In matrix notation this relation becomes $HN-NH-2N=0$. Now remark that 
\begin{align*}
N=\left(\begin{array}{ccccc}
0 & \alpha_1 &0& \ldots &0\\
0 & 0& \alpha_2&\ldots&0\\
\vdots&\vdots&\vdots&\vdots&\vdots\\
0 & 0&\ldots&0&\alpha_n\\
0 & 0&\ldots&0&0\\
\end{array}\right),\,
M=\left(\begin{array}{ccccc}
0 & 0 &0& \ldots &0\\
\alpha_1& 0&0&\ldots&0\\
0& \alpha_2&0&\ldots&0\\
\vdots&\vdots&\vdots&\vdots&\vdots\\
0 & 0&\ldots&\alpha_n&0\\
\end{array}\right),\\
H=\left(\begin{array}{cccccc}
\alpha_1^2 & 0 &0& \ldots &0\\
0 &  \alpha_2^2-\alpha_1^2&0&\ldots&0\\
0&0&\alpha_3^2-\alpha_2^2&\ldots&0\\
\vdots&\vdots&\vdots&\vdots&\vdots\\
0&0&\ldots&\alpha_{n}^2-\alpha_{n-1}^2&0\\
0 & 0&\ldots&0&-\alpha_n^2\\
\end{array}\right).
\end{align*}
Hence this relation becomes
\begin{align*}
\left(\begin{array}{cccccc}
0 & b_1&0 &0& \ldots &0\\
0 & 0& b_2&0&\ldots&0\\
\vdots&\vdots&\vdots&\vdots&\vdots&\vdots\\
0 & 0&\ldots&0&0&b_n\\
0 & 0&\ldots&0&0&0
\end{array}\right)=0,
\end{align*}
where $b_i=\alpha_i(-\alpha_{i-1}^2+2\alpha_i^2-\alpha_{i+1}^2)-2\alpha_i$ for $2\leq i\leq n-1$, $b_1=\alpha_1(2\alpha_1^2-\alpha_2^2)-2\alpha_1$ and $b_n=\alpha_n(-\alpha_{n-1}^2+2\alpha_n^2)-2\alpha_n$;
and we need to solve the equations 
\begin{align*}
\left\{\begin{array}{l}
\alpha_1(2\alpha_1^2-\alpha_2^2)=2\alpha_1\\
\alpha_2(-\alpha_1^2+2\alpha_2^2-\alpha_3^2)=2\alpha_2\\
\,\,\,\,\vdots\\
\alpha_n(-\alpha_{n-1}^2+2\alpha_n^2)=2\alpha_n.
\end{array}\right.
\end{align*}
Since we suppose that none of the $\alpha_i$ vanish, this simplifies to 
\begin{align*}
\left(\begin{array}{cccccc}
2&-1&0&0&\ldots &0\\
-1&2&-1&0&\ldots&0\\
\vdots&\vdots&\vdots&\vdots&\vdots&\vdots\\
0&\ldots&0&-1&2&-1\\
0&\ldots&0&0&-1&2
\end{array}\right)
\left(\begin{array}{c}
\alpha_1^2\\
\alpha_2^2\\
\vdots\\
\alpha_{n-1}^2\\
\alpha_n^2
\end{array}\right)
=\left(\begin{array}{c}
2\\
2\\
\vdots\\
2
\end{array}\right).
\end{align*}
One can verify that a solution is given by $\alpha_i^2=i(n+1-i)$ for $1\leq i\leq n$. We choose the positive solutions and put $\alpha_i=\sqrt{i(n+1-i)}$. Then it is readily verified (repeat the above calculations) that also the second relation $[H,M]=-2M$ from remark \ref{rem:liealg} is satisfied. We have now proven the
\begin{lemma}
\label{lem:reprth1}
Let $n\in\N$ and define $N_n=\sum_{i=1}^n\sqrt{i(n+1-i)} x_{i+1}\p{x_i}$, then the triple $N_n$, $M_n:=N_n^*$, $H=[N_n,M_n]$ defines a Lie-algebra isomorphic to $\slt$.
\end{lemma}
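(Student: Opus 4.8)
The plan is to verify directly the three defining relations of $\slt$ from Remark \ref{rem:liealg}, namely $[N,M]=H$, $[H,N]=2N$ and $[H,M]=-2M$, for the triple $N=N_n$, $M=M_n=N_n^*$, $H=[N_n,M_n]$. The first relation holds by the very definition of $H$, so the work lies entirely in the remaining two. Throughout I would pass to matrices via the bijection $\phi$, using that $\phi([A_v,B_v])=AB-BA$, so that all three relations become identities of $(n+1)\times(n+1)$ matrices. Since the coefficients $\alpha_i$ are real, the adjoint $M=N^*$ with respect to (\ref{eq:inpvv}) is simply the transpose of $N$: the matrix $N$ carries the entries $N_{i,i+1}=\alpha_i$ on the superdiagonal and $M=N^{T}$ carries $M_{i+1,i}=\alpha_i$ on the subdiagonal.

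First I would compute $H=NM-MN$ explicitly. A short index computation shows that $NM$ and $MN$ are diagonal with $(NM)_{ii}=\alpha_i^2$ and $(MN)_{ii}=\alpha_{i-1}^2$ (adopting the convention $\alpha_0=\alpha_{n+1}=0$), so that $H$ is the diagonal matrix with entries $d_i:=\alpha_i^2-\alpha_{i-1}^2$. This reproduces the matrix $H$ displayed above and, in particular, shows that $H$ is symmetric.

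Next I would reduce $[H,N]=2N$ to a scalar system. Because $H=\mathrm{diag}(d_1,\dots,d_{n+1})$ and $N$ carries only the entries $N_{i,i+1}=\alpha_i$, the commutator $HN-NH$ has the entry $\alpha_i(d_i-d_{i+1})$ in position $(i,i+1)$ and vanishes elsewhere; hence $[H,N]=2N$ is equivalent to $\alpha_i(d_i-d_{i+1})=2\alpha_i$ for all $i$. Since every $\alpha_i\neq0$, this is exactly $d_i-d_{i+1}=2$, i.e. $-\alpha_{i-1}^2+2\alpha_i^2-\alpha_{i+1}^2=2$, which is precisely the tridiagonal ($A_n$ Cartan) system written above. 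The one genuinely computational step is then to check that $\alpha_i^2=i(n+1-i)$ solves it: substituting and expanding the three quadratics in $i$ cancels all terms of degree $\ge 1$ in $i$ and leaves the constant $2$, while the boundary cases $i=1$ and $i=n$ are handled by the same identity once the convention $\alpha_0=\alpha_{n+1}=0$ is used.

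Finally, rather than repeating the calculation for $[H,M]=-2M$, I would obtain it for free by transposing the relation $[H,N]=2N$: since $H^{T}=H$ and $N^{T}=M$, one has $(HN-NH)^{T}=MH-HM=-[H,M]$, whereas $(2N)^{T}=2M$, so $[H,M]=-2M$ follows at once. With all three relations of Remark \ref{rem:liealg} verified, that remark yields the isomorphism with $\slt$, completing the proof. The only real obstacle is the bookkeeping with the boundary indices $\alpha_0,\alpha_{n+1}$ and the algebraic verification of the closed form $\alpha_i^2=i(n+1-i)$; everything else is formal.
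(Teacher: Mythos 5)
Your proof is correct and follows essentially the same route as the paper: reduce $[H,N]=2N$ to the tridiagonal system $-\alpha_{i-1}^2+2\alpha_i^2-\alpha_{i+1}^2=2$ (with $\alpha_0=\alpha_{n+1}=0$) and verify that $\alpha_i^2=i(n+1-i)$ solves it, then invoke Remark \ref{rem:liealg}. Your only deviation is deriving $[H,M]=-2M$ by transposing $[H,N]=2N$ (using $H^{T}=H$, $N^{T}=M$), a slightly slicker step than the paper's ``repeat the above calculations,'' but not a different method.
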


We are now ready to show that 
\begin{lemma}
\label{lem:reprth}
Let $\delta\in \N\setminus\{0\}$. For a given $N_n=\sum_{i=1}^n\sqrt{i(n+1-i)} x_{i+1}\p{x_i}$ the associated triple $d_0$, $d_0^*$, $D=[d_0,d_0^*]$ defines an $\slt$ representation. Here is $d_0$ the Lie operator $U\mapsto [N_n,U]$ acting on $\mathcal{V}_\delta$ and $d_0^*$ its adjoint.
\end{lemma}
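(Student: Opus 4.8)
The plan is to exhibit $d_0$, $d_0^*$ and $D$ as the images of the generators $N_n$, $M_n$, $H_n$ under the adjoint action of the Lie algebra of linear vector fields on $\mathcal{V}_\delta$. By Remark \ref{rem:liealg} it suffices to verify the three structure relations $[D,d_0]=2d_0$, $[D,d_0^*]=-2d_0^*$ and $[d_0,d_0^*]=D$, the last of which holds by the very definition $D=[d_0,d_0^*]$. For a linear vector field $A$ write $\mathrm{ad}_A$ for the operator $U\mapsto[A,U]$; this preserves $\mathcal{V}_\delta$, since bracketing with a degree-one field preserves the polynomial degree. The Jacobi identity says that $A\mapsto\mathrm{ad}_A$ is a Lie-algebra homomorphism, i.e. $[\mathrm{ad}_A,\mathrm{ad}_B]=\mathrm{ad}_{[A,B]}$. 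As $d_0=\mathrm{ad}_{N_n}$ by definition, once I know that $d_0^*=\mathrm{ad}_{M_n}$ I obtain $D=[\mathrm{ad}_{N_n},\mathrm{ad}_{M_n}]=\mathrm{ad}_{[N_n,M_n]}=\mathrm{ad}_{H_n}$, and the two remaining relations then drop out of the homomorphism property together with $[H_n,N_n]=2N_n$ and $[H_n,M_n]=-2M_n$ from Lemma \ref{lem:reprth1}.

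The entire content is therefore the identity $(\mathrm{ad}_{N_n})^*=\mathrm{ad}_{N_n^*}=\mathrm{ad}_{M_n}$, and I would prove the cleaner general statement $(\mathrm{ad}_A)^*=\mathrm{ad}_{A^*}$ for every linear vector field $A$. To set this up, identify $\mathcal{V}_\delta$ with $\mathcal{P}_{\delta+1}\otimes\C^n$ via $P\p{x_k}\mapsto P\otimes e_k$; under this identification the inner product $(\ref{eq:inpvv})$ is exactly the tensor product of the polynomial inner product on $\mathcal{P}_{\delta+1}$ with the standard Hermitian product on $\C^n$. A short computation of $[A,P\p{x_k}]$ gives the tensor decomposition $\mathrm{ad}_A=\widehat{A}\otimes\mathrm{id}-\mathrm{id}\otimes A$, where $\widehat{A}$ denotes $A$ acting as a derivation on polynomials and the second $A$ is the matrix of $A$ acting on $\C^n$. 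Adjoints may then be taken factor by factor, the $\C^n$-factor contributing $A^*$ trivially.

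The essential computation is that on the polynomial factor $(\widehat{A})^*=\widehat{A^*}$, and this is exactly where the chosen inner product does its work: the normalization $\alpha!/|\alpha|!$ makes multiplication by $x_j$ and the derivation $\p{x_j}$ mutually adjoint on each fixed degree, so that $(x_j\p{x_i})^*=x_i\p{x_j}$; summing against the entries of $A$ turns $\widehat{A}=\sum_{i,j}a_{ij}x_j\p{x_i}$ into $\sum_{i,j}\bar{a}_{ij}x_i\p{x_j}$, which is $\widehat{A^*}$ after relabelling the summation indices. Combined with the $\C^n$-factor this yields $(\mathrm{ad}_A)^*=\widehat{A^*}\otimes\mathrm{id}-\mathrm{id}\otimes A^*=\mathrm{ad}_{A^*}$, and in particular $d_0^*=\mathrm{ad}_{M_n}$, closing the argument. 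I expect the main obstacle to be precisely this bookkeeping: one must check that the degree-dependent normalization does not spoil the $x_j$–$\p{x_j}$ adjunction (it does not, since on a fixed degree it is a positive constant scalar), and one must keep the transposes and complex conjugations straight in passing from $A$ to $\widehat{A}$ and to the $\C^n$-action, which is where a sign or transpose slip would occur. As an independent check, the same identity $d_0^*=\mathrm{ad}_{M_n}$ can be read off directly from the explicit matrix formula for $d_0^*$ in Lemma \ref{lem:adjoint}.
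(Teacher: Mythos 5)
Your proof is correct, but it takes a genuinely different route from the paper's. The paper argues by direct computation: using Lemma \ref{lem:adjoint} it writes $d_0$ and $d_0^*$ as explicit $(n+1)\times(n+1)$ block matrices (the derivation actions of $N$ and $M=N^*$ on the diagonal, the scalars $-\alpha_i$ on the super- resp.\ subdiagonal), computes $D=[d_0,d_0^*]$ as a block-diagonal matrix with blocks $H+(\alpha_i^2-\alpha_{i-1}^2)I$, and then verifies $[D,d_0]=2d_0$ and $[D,d_0^*]=-2d_0^*$ entry by entry, invoking Lemma \ref{lem:reprth1} to simplify the resulting blocks. You instead isolate the structural identity $(\mathrm{ad}_A)^*=\mathrm{ad}_{A^*}$ for an arbitrary linear field $A$ --- proved via the factorization $\mathcal{V}_\delta\cong\mathcal{P}_{\delta+1}\otimes\C^n$, under which $\mathrm{ad}_A=\widehat{A}\otimes\mathrm{id}-\mathrm{id}\otimes A$, together with the adjunction $(x_j\p{x_i})^*=x_i\p{x_j}$ for the weighted inner product --- and then transport the relations of Lemma \ref{lem:reprth1} through the homomorphism $A\mapsto\mathrm{ad}_A$ furnished by the Jacobi identity. (The paper uses this same homomorphism property silently, when it identifies the diagonal blocks $HN-NH$ of $[D,d_0]$ with $2N$ acting on polynomials.) Your computations check out: the block matrix in Lemma \ref{lem:adjoint} is indeed the matrix of $\mathrm{ad}_{N^*}$, and the degree-fixing operators $x_j\p{x_i}$ really are pairwise adjoint under the $\alpha!/|\alpha|!$ normalization, which is the one place the specific inner product enters. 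Your route buys two things: it explains conceptually why that inner product is the right one, and it makes Proposition \ref{prop:mainprop} on direct sums of nilpotent blocks immediate, since $(\mathrm{ad}_A)^*=\mathrm{ad}_{A^*}$ is established for all linear $A$ at once rather than for the single chain $N_n$. What the paper's hands-on computation buys in exchange is the explicit block-diagonal form of $D$, i.e.\ a concrete description of the weight decomposition of $\mathcal{V}_\delta$, though that explicit form is not actually needed in the sequel.
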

\begin{proof}
Put $\alpha_i=\sqrt{i(n+1-i)}$ and $I$ the identity operator. The Lie operator acting on vector fields $d_0(\sum_{i=1}^{n+1}V_i\p{x_i})$ can be expressed using matrix notation as
\begin{align*}
\left(\begin{array}{cccccc}
N&-\alpha_1I&0&0\ldots &0\\
0&N&-\alpha_2I&0\ldots &0\\
\vdots&\vdots&\vdots&\vdots&\vdots&\vdots\\
0&\ldots&0&0&N&-\alpha_nI\\
0&\ldots&0&0&0&N\\
\end{array}\right)
\left(\begin{array}{c}
V_1\\
V_2\\
\vdots\\
V_{n}\\
V_{n+1}
\end{array}\right);
\end{align*}
and its adjoint $d_0^*(\sum_{i=1}^{n+1}V_i\p{x_i})$ as (see also lemma \ref{lem:adjoint} and remember that $M=N^*$)
\begin{align*}
\left(\begin{array}{cccccc}
M&0&0&0\ldots &0\\
-\alpha_1I&M&0&0\ldots &0\\
\vdots&\vdots&\vdots&\vdots&\vdots&\vdots\\
0&\ldots&0&-\alpha_{n-1}I&M&0\\
0&\ldots&0&0&-\alpha_nI&M\\
\end{array}\right)
\left(\begin{array}{c}
V_1\\
V_2\\
\vdots\\
V_{n}\\
V_{n+1}
\end{array}\right).
\end{align*}
Hence it is readily verified that the commutator $D=[d_0,d_0^*]=d_0d_0^*-d_0^*d_0$ can be expressed as
\begin{align*}
\left(\begin{array}{cccccc}
H+\alpha_1^2I & 0 &0& \ldots &0\\
0 & H+(\alpha_2^2-\alpha_1^2)I&0&\ldots&0\\
0&0&H+(\alpha_3^2-\alpha_2^2)I&\ldots&0\\
\vdots&\vdots&\vdots&\vdots&\vdots\\
0&0&\ldots&H+(\alpha_{n}^2-\alpha_{n-1}^2)I&0\\
0 & 0&\ldots&0&H-\alpha_n^2I\\
\end{array}\right),
\end{align*}
where $H=[N,M]$.
Now the commutator $[D,d_0]$ simplifies as
\begin{align*}
\left(\begin{array}{cccccc}
a_1&b_1&0&0\ldots&0\\
0&a_2&b_2&0&0\ldots&0\\
\vdots&\vdots&\vdots&\vdots&\vdots&\vdots\\
0&0&0&0\ldots& a_n&b_{n}\\
0&0&0&0\ldots& 0&a_{n+1}
\end{array}\right),
\end{align*}
with 
\begin{align*}
a_i&=HN-NH+ (\alpha_i^2-\alpha_{i-1}^2)(NI-IN)=[H,N]=2N,\\
b_i&=-\alpha_i(H+\alpha_i^2I-\alpha_{i-1}^2I)+\alpha_i(H+\alpha_{i+1}^2I-\alpha_i^2I)=\\
&=-\alpha_i(-\alpha_{i+1}^2+2\alpha_i^2-\alpha_{i-1}^2)I\\
&=-2\alpha_iI;
\end{align*}
where we have put $\alpha_0=0$ and $\alpha_{n+1}=0$ in the above calculation. We also used the fact that the triple $N$, $M$, $H$ defines a Lie algebra isomorphic to $\slt$.
Hence $[D,d_0]=2d_0$.\\
Making analogous calculations, one verifies that also $[D,d_0^*]=-2d_0^*$.
\end{proof}
As a corollary of this lemma we can consider the case of multiple nilpotent blocks as follows. Remark that we allow zero blocks (i.e. $k_i=0$ for some $i$).
\begin{proposition}
\label{prop:mainprop} Let $k_1$, $k_2$, $\ldots$, $k_n$ be natural numbers and let $x^i$ be a $k_i$-dimensional variable $(x^i_1,\ldots,x^i_{k_i+1})$, for $1\leq i\leq n$.
Let $N=N_{k_1}(x^1)+\ldots +N_{k_n}(x^n)$, where 
\begin{align*}
N_{k_j}(x^j)=\sum_{i=1}^{k_j} \sqrt{i(n-i+1)}x^j_{i+1}\p{x^j_i},\qquad N_0=0.
\end{align*}
Then the triple $N$, $M:=N^*$, $H=[N,M]$ defines a Lie algebra isomorphic to $\slt$. Moreover let $d_0$ be the associated Lie operator, then also the triple $d_0$, $d_0^*$, $D=[d_0,d_0^*]$ defines a Lie algebra isomorphic to $\slt$.
\end{proposition}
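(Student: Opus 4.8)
The plan is to reduce the proposition to Lemma~\ref{lem:reprth1}, Lemma~\ref{lem:reprth} and Lemma~\ref{lem:adjoint} by exploiting two structural facts: vector fields living in disjoint groups of variables commute, and the assignment $Y\mapsto\mathrm{ad}_Y:=[Y,\,\cdot\,]$ is a Lie-algebra homomorphism from the linear vector fields into the operators on $\mathcal{V}_\delta$. Throughout, for $1\le j\le n$ I write $N_j:=N_{k_j}(x^j)$, $M_j:=N_j^*$ and $H_j:=[N_j,M_j]$; by Lemma~\ref{lem:reprth1} each triple $N_j,M_j,H_j$ is a copy of $\slt$ living in the variables $x^j$ alone.

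First I would prove the claim for $N,M,H$ themselves. Since passing to the (conjugate-transpose) adjoint is additive and distinct blocks occupy disjoint variables, $M=N^*=\sum_jM_j$. Because $N_i$ and any vector field in the variables $x^j$ with $j\ne i$ involve disjoint coordinates, all mixed brackets $[N_i,M_j]$, $[H_i,N_j]$, $[H_i,M_j]$ with $i\ne j$ vanish. Hence the only surviving terms are diagonal, so $H=[N,M]=\sum_jH_j$, and, applying Lemma~\ref{lem:reprth1} to each diagonal contribution,
\begin{align*}
[H,N]=\sum_j[H_j,N_j]=2\sum_jN_j=2N,\qquad [H,M]=\sum_j[H_j,M_j]=-2\sum_jM_j=-2M.
\end{align*}
Together with $[N,M]=H$ these are precisely the relations of Remark~\ref{rem:liealg}, so $N,M,H$ span a copy of $\slt$.

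Next I would transport this conclusion to $d_0,d_0^*,D$. By Lemma~\ref{lem:adjoint} the inner-product adjoint of $d_0=\mathrm{ad}_N$ on $\mathcal{V}_\delta$ is again a Lie operator, and the single-block computation carried out in the proof of Lemma~\ref{lem:reprth}, extended to the sum by additivity of the adjoint, identifies it as $d_0^*=\mathrm{ad}_{N^*}=\mathrm{ad}_M$. Since $N,M,H\in\mathcal{V}_0$ and $[\mathcal{V}_0,\mathcal{V}_\delta]\subseteq\mathcal{V}_\delta$, the Jacobi identity makes $Y\mapsto\mathrm{ad}_Y|_{\mathcal{V}_\delta}$ a Lie-algebra homomorphism, so the three relations of the previous paragraph push forward:
\begin{align*}
D=[d_0,d_0^*]=\mathrm{ad}_{[N,M]}=\mathrm{ad}_H,\quad [D,d_0]=\mathrm{ad}_{[H,N]}=2d_0,\quad [D,d_0^*]=\mathrm{ad}_{[H,M]}=-2d_0^*.
\end{align*}
By Remark~\ref{rem:liealg} this exhibits $d_0,d_0^*,D$ as a copy of $\slt$, i.e.\ an $\slt$-representation on $\mathcal{V}_\delta$, which is the second assertion.

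I expect the only delicate point to be the identification $d_0^*=\mathrm{ad}_{N^*}$: one must know that taking the inner-product adjoint of the Lie operator $\mathrm{ad}_N$ amounts to replacing the matrix of $N$ by its conjugate transpose. This is exactly the content of the matrix computation behind Lemma~\ref{lem:adjoint} and Lemma~\ref{lem:reprth}, and it is the hinge that lets the Jacobi-identity argument replace a laborious direct evaluation of $[D,d_0]$ and $[D,d_0^*]$ in the multi-block setting.
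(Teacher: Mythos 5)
Your proof is correct, and it follows the same overall strategy as the paper --- the paper's own proof is literally the one sentence ``Use the concept of a direct sum, lemma \ref{lem:reprth1} and lemma \ref{lem:reprth}'' --- but your handling of the operator-level statement is a genuinely different, and cleaner, route. The paper's intended argument would apply Lemma \ref{lem:reprth} blockwise; this requires some care, because for several blocks the space $\mathcal{V}_\delta$ is \emph{not} a direct sum of the single-block spaces (the components are polynomials in \emph{all} the variables), so the single-block matrix computation does not apply verbatim and would have to be redone on the full space. You sidestep this entirely: once you know (i) the cross-brackets between blocks vanish, so $N,M,H$ satisfy the relations of Remark \ref{rem:liealg} by Lemma \ref{lem:reprth1}, and (ii) $d_0^*=\mathrm{ad}_{N^*}$, which is exactly what Lemma \ref{lem:adjoint} gives for a linear vector field, then the Jacobi identity ($\mathrm{ad}_{[X,Y]}=[\mathrm{ad}_X,\mathrm{ad}_Y]$) transports the three relations from the vector fields to the operators with no computation at all. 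You correctly isolate (ii) as the hinge; it does hold in the multi-block setting, since the adjoint is additive and Lemma \ref{lem:adjoint} is stated for an arbitrary linear part. As a bonus, your argument gives a computation-free proof of Lemma \ref{lem:reprth} itself. One shared (and harmless) blemish, present in the paper as well: if all $k_i=0$ the triple is zero and Remark \ref{rem:liealg} does not literally apply, so one should tacitly assume at least one block is nontrivial.
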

\begin{proof}
Use the concept of a direct sum, lemma \ref{lem:reprth1} and lemma \ref{lem:reprth}.
\end{proof}

\section{Proof of the main theorems}
\label{sec:mainres}
This is rather a summary of all the foregoing.
From linear algebra we know that, up to a linear change of variables, it is no restriction to start with a vector field $X=N+R$ where $N$ is as in proposition \ref{prop:mainprop}. Let now $d_0$ be the associated Lie operator. Let $\delta\in\N\setminus\{0\}$. We are interested in the calculation of eigenvalues of the associated operator $\Box_\delta=d_0d_0^*$ acting on $\mathcal{V}_\delta$.
According to proposition \ref{prop:mainprop}, we know that the triple $d_0$, $d_0^*$ and $D=[d_0,d_0^*]$ defines a Lie algebra isomorphic to $\slt$. It follows, using theorem \ref{theo:simp}, that the associated representation acting  on $\mathcal{V}_\delta$ can be split in a direct sum of irreducible representations. Hence, up to a linear coordinate transform $\varphi$ (acting on the space $\mathcal{V}_\delta$), we can suppose that we are dealing with a representation of the form
\begin{align*}
&N=\widetilde{N}_1\oplus \widetilde{N}_2\oplus \ldots\oplus \widetilde{N}_l,\\ 
&M=\widetilde{M}_1\oplus \widetilde{M}_2\oplus \ldots\oplus \widetilde{N}_l,\\
&H=\widetilde{H}_1\oplus \widetilde{H}_2\oplus \ldots\oplus \widetilde{H}_l;
\end{align*}
 where $\widetilde{N}_i$, $\widetilde{M}_i$ and $\widetilde{H}_i$ are as in theorem \ref{theo:irep}. Hence $\varphi$ transforms
 the operator $\Box_\delta=d_0d_0^*$ into $NM$. The nonzero eigenvalues of the operator
 $NM=\widetilde{N}_1\widetilde{M}_1\oplus\ldots\oplus \widetilde{N}_l\widetilde{M}_l$ are positive integers because each
 $\widetilde{N}_i\widetilde{M}_i$ is a diagonal matrix containing integers on the diagonal. Hence the same is true for the
 operator $\Box_\delta$. Now using theorem \ref{thm:gev} with $\tau=0$ and $\gamma=1$ (or theorem \ref{thm:ann} in case the
 vector field is formally linearizable) finishes the proof theorem \ref{theo:mainres}.  
 Similarly, using theorem \ref{thm:optord} with $\tau=0$ and $\gamma=1$  finishes the proof theorem \ref{theo:mainres2}.

\section{Acknowledgement}
This research was initiated by the second author during a visit at the university of Nice, where L. Stolovitch pointed him to this problem. We also wish to thank M. Van den Bergh for providing some useful ideas on the theory of representations. 

\bibliographystyle{plain}
\bibliography{bibdatabase}
\end{document}